\newtheorem{theorem}{Theorem}[section]
\newtheorem{lemma}[theorem]{Lemma}
\newtheorem{corollary}[theorem]{Corollary}
\newtheorem{definition}[theorem]{Definition}
\newtheorem{proposition}[theorem]{Proposition}
\newtheorem*{blackadarconjecture}{Blackadar's Conjecture}
\newcommand{\calC}{\mathcal{C}}
\newcommand{\calF}{\mathcal{F}}
\newcommand{\calG}{\mathcal{G}}
\newcommand{\calO}{\mathcal{O}}
\newcommand{\calP}{\mathcal{P}}
\newcommand{\calU}{\mathcal{U}}
\newcommand{\bbK}{\mathbb{K}}
\newcommand{\bbN}{\mathbb{N}}
\newcommand{\bbT}{\mathbb{T}}
\newcommand{\bbZ}{\mathbb{Z}}
\newcommand{\Ad}[0]{\operatorname{Ad}}
\newcommand{\Bott}[0]{\operatorname{Bott}}
\newcommand{\cel}[0]{\operatorname{cel}}
\newcommand{\dist}[0]{\operatorname{dist}}
\newcommand{\Ext}[0]{\operatorname{Ext}}
\newcommand{\Hom}[0]{\operatorname{Hom}}
\newcommand{\id}[0]{\operatorname{id}}
\newcommand{\Length}[0]{\operatorname{Length}}
\newcommand{\rank}[0]{\operatorname{rank}}
\newcommand{\oline}{\overline}
\newcommand{\uline}{\underline}
\title[Semiprojectivity for Kirchberg algebras]{Semiprojectivity for Kirchberg algebras}
\date{\today}
\author{Dominic Enders}
\thanks{This work was supported by the Danish National Research Foundation through the {\itshape Centre for Symmetry and Deformation} (DNRF92).}
\subjclass[2010]{46L05, 46L80}
\address{Dominic Enders
\newline Department of Mathematical Sciences, University of Copenhagen
\newline Universitetsparken 5, DK-2100 Copenhagen \O, Denmark}
\email{d.enders@math.ku.dk}
\begin{document}

\begin{abstract}
We show that a Kirchberg algebra is semiprojective if and only if it is $KK$-semiprojective.
In particular, this shows that a Kirchberg algebra in the UCT-class is semiprojective if and only if its $K$-theory is finitely generated, thereby giving a positive solution to a longstanding conjecture by Blackadar.
\end{abstract}

\maketitle


\section{Introduction}\label{sec: 1}

Semiprojectivity is a rigidity property for $C^*$-algebras which is conceptually and technically important, as it gives the right framework for the systematic study of $C^*$-algebraic perturbation questions.
In this paper we identify which Kirchberg algebras, i.e., which separable, purely infinite, simple, nuclear $C^*$-algebras have the property of being semiprojective.

This problem was first studied in the mid-90's by Blackadar in the context of the deep classification results by Kirchberg and Phillips (\cite{Kir},\cite{Phi00}).
In fact, using said classification results, Blackadar showed that semiprojective Kirchberg algebras satisfying the Universal Coefficient Theorem of Rosenberg and Schochet (\cite{RS87}) must necessarily have finitely generated $K$-groups.
On the other hand, he was able to verify semiprojectivity in certain special cases, in particular for Cuntz-Krieger algebras $\calO_A$  and for the Cuntz algebra $\calO_\infty$.
Motivated by these examples, he conjectured that finitely generated $K$-theory should be the only obstruction for UCT-Kirchberg algebras to be semiprojective.

\begin{blackadarconjecture}[{\cite[Conj. 3.6]{Bla04}}]\label{conj:Blackadar}
 A Kirchberg algebra in the UCT-class is semiprojective if and only if its $K$-theory is finitely generated.
\end{blackadarconjecture}

Subsequently, Szyma{\'n}ski (\cite{Szy02}) confirmed semiprojectivity for UCT-Kirchberg algebras $A$ with finitely generated $K$-groups under the additional assumptions that $K_1(A)$ is torsion-free and $\rank(K_1(A))\leq\rank(K_0(A))$.
The restriction on the rank of the $K$-groups was then later removed by Spielberg (\cite{Spi09}).
Furthermore, a weak version of Blackadar's conjecture, i.e.\ a $K$-theoretic characterization of weak semiprojectivity for UCT-Kirchberg algebras, was independently obtained by Spielberg (\cite{Spi07}) and Lin (\cite{Lin07}).
However, despite significant effort by various people, Blackadar's conjecture remained open for UCT-Kirchberg algebras with torsion in $K_1$.
In this article, we give a $KK$-theoretic characterization of semiprojectivity for Kirchberg algebras which in the presence of the UCT reduces to a positive solution of Blackadar's conjecture.

The approach to verify semiprojectivity which we develop here is new and of independent interest.
In fact, it is completely different from the usual method. 
Typically, one chooses models for the $C^*$-algebras in question and then tries to find presentations of those by generators and stable relations.
In particular, all the above-mentioned papers follow this pattern.
More precisely, they use certain graph $C^*$-algebras as models and then verify that the relations coming from the underlying graph structure are partially liftable.
Kirchberg algebras with torsion in $K_1$, however, can not be realized as graph $C^*$-algebras.
Instead one needs to pass to higher rank graphs in the sense of \cite{KP00}, which leads to commutation relations in the presentation of these algebras.
Similarly, all other known presentations of Kirchberg algebras with torsion in $K_1$, typically coming from dynamical systems or more general crossed products, contain at least implicitly some sort of commutation relation.
Since commutation relations tend to be unstable in general, this explains the difficulty in dealing with $K_1$-torsion.

In significant contrast to this, our approach does not rely on the choice of any presentation by generators and relations at all.
Instead we study the class of $^*$-homomorphisms between Kirchberg algebras which enjoy good lifting properties.
Our main result here is a closure result, saying that semiprojectivity of $^*$-homo\-morphisms between Kirchberg algebras is preserved under asymptotic unitary equivalence. 
This reduces the question of semiprojectivity to finding semiprojective endomorphisms in the $KK$-class of the identity.
We then verify the existence of such endomorphisms for Kirchberg algebras satisfying a $KK$-theoretic version of semiprojectivity, which in the presence of the UCT is equivalent to having finitely generated $K$-theory.

The closure result mentioned above is based on the crucial observation that it is possible to lift an asymptotic unitary equivalence of $^*$-homomorphisms, provided one has control over the rectifiable length of the path of unitaries implementing it.
While this is not automatically the case, we exploit a Basic Homotopy Lemma as a method to suitably shorten the length of such paths.
The respective version of the Basic Homotopy Lemma for Kirchberg algebras is derived in Section \ref{sec: 2}, mostly following results from \cite{DE07}, while the lifting procedure is discussed in Section \ref{sec: 3}. 

In section \ref{sec: 4}, we deduce the existence result for semiprojective endomorphisms of Kirchberg algebras from Kirchberg's Classification Theorem for properly infinite $C^*$-algebras (\cite{Kir}). This part is largely inspired by Neub{\"u}ser's work on asymptotic semiprojectivity in \cite{Neu00}.

We conclude by combining the existence and closure results.
This yields our main Theorem \ref{thm: main}: For Kirchberg algebras, semiprojectivity and $KK$-semiprojectivity are equivalent.
If one further assumes the UCT, one obtains a positive answer to Blackadar's Conjecture: A UCT-Kirchberg algebra is semiprojective if and only if its $K$-theory is finitely generated.


\section{A Basic Homotopy Lemma}\label{sec: 2}

In this section, we provide a version of the Basic Homotopy Lemma which in particular covers inclusions $A\hookrightarrow B$ of UCT-Kirchberg algebras.
To our best knowledge, the first such result was obtained by Bratteli, Elliott, Evans and Kishimoto in \cite{BEEK98} for the case $A=B=\calO_n$.
It is possible to generalize their methods to general UCT-Kirchberg algebras using the models provided by Elliott and R{\o}rdam in \cite{ER95}.
In fact, this provided our first proof of Theorem \ref{thm: shortening} before we became aware of \cite[Theorem 2.6]{DE07}.

Here we follow the more advanced proof of \cite[Theorem 2.6]{DE07} with only a few minor modifications.
For instance, we name the appearing $KK$-theoretic obstacle and note that the unitary homotopies can be obtained with uniformly bounded length.
As a more significant change, we make use of the following $KK$-theoretic lifting property introduced in \cite[Section 3]{Dad09} in order to avoid any UCT-assumption.

\begin{definition}\label{def: KK-semiprojectivity}
A separable $C^*$-algebra $A$ is $KK$-semiprojective if for any separable $C^*$-algebra $D$ and any increasing sequence of ideals $J_n$ in $D$ with $J_\infty=\oline{\bigcup_n J_n}$, the induced map $\varinjlim KK(A,D/J_n)\to KK(A,D/J_\infty)$ is surjective.
\end{definition}

$KK$-semiprojectivity of a $C^*$-algebra $A$ is in fact equivalent to continuity of the functor $KK(A,-)$ with respect to countable direct limits, see \cite[Theorem 3.12]{Dad09} for a proof based on a mapping telescope construction.
Most importantly, this is the case whenever the $C^*$-algebra $A$ satisfies the UCT and has finitely generated $K$-theory, cf.\ \cite[Theorem 1.14, Proposition 7.13]{RS87}.

\begin{lemma}\label{lem: suspension}
Let $A$ be a separable $KK$-semiprojective $C^*$-algebra $A$. 
Then $C(\bbT)\otimes A$ is $KK$-semiprojective as well.
\end{lemma}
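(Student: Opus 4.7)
\emph{Plan of proof.} Evaluation at $1 \in \bbT$ together with the inclusion of constants gives a natural split short exact sequence $0 \to C_0(\mathbb{R}) \to C(\bbT) \to \bbC \to 0$. Tensoring with $A$ (the minimal and maximal tensor products coincide since $C_0(\mathbb{R})$ is nuclear) produces a natural split short exact sequence
\[
0 \to SA \longrightarrow C(\bbT) \otimes A \longrightarrow A \to 0, \qquad SA := C_0(\mathbb{R}) \otimes A.
\]
For any separable $C^*$-algebra $B$, the induced six-term sequence in $KK(-,B)$ splits, yielding a direct sum decomposition
\[
KK(C(\bbT) \otimes A, B) \;\cong\; KK(A, B) \oplus KK(SA, B)
\]
that is functorial in $B$ and hence compatible with inductive limits and with the maps $D/J_n \to D/J_\infty$ from Definition \ref{def: KK-semiprojectivity}.

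This reduces the surjectivity of $\varinjlim KK(C(\bbT) \otimes A, D/J_n) \to KK(C(\bbT) \otimes A, D/J_\infty)$ to the same statement with $A$ and with $SA$ in place of $C(\bbT) \otimes A$. The case of $A$ is precisely the hypothesis. For $SA$, I would invoke the suspension isomorphism $KK(SA, B) \cong KK(A, SB)$ to rewrite the map as
\[
\varinjlim KK(A, S(D/J_n)) \to KK(A, S(D/J_\infty)).
\]
Since $C_0(\mathbb{R})$ is nuclear, the functor $S$ preserves short exact sequences, so $S(D/J_n) = SD/SJ_n$ and $S(D/J_\infty) = SD/SJ_\infty$. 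The $SJ_n$ form an increasing sequence of ideals in the separable $C^*$-algebra $SD$ whose union is dense in $SJ_\infty$ (since $\oline{\bigcup_n J_n} = J_\infty$ and the algebraic tensor products are dense). A second application of $KK$-semiprojectivity of $A$, now to this system $SJ_n \subset SD$, then provides the desired surjectivity.

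The argument is essentially formal and I do not anticipate a genuine obstacle. The only two points that require a moment of thought are the naturality of the direct sum decomposition in $B$ (coming from naturality of the splitting $\bbC \to C(\bbT)$ at the level of $*$-homomorphisms) and the compatibility of suspension with the ideal structure of $D$; both are ultimately consequences of the nuclearity of $C_0(\mathbb{R})$.
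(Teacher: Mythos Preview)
Your proof is correct and follows essentially the same route as the paper: reduce via the split exact sequence $0\to SA\to C(\bbT)\otimes A\to A\to 0$ to the case of $SA$, then use the natural isomorphism $KK(SA,B)\cong KK(A,SB)$ (which the paper phrases as Bott periodicity) together with $KK$-semiprojectivity of $A$. You are in fact slightly more explicit than the paper in spelling out why the suspended system $SJ_n\subset SD$ again fits the framework of Definition~\ref{def: KK-semiprojectivity}.
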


\begin{proof}
We only prove $KK$-semiprojectivity for the suspension of $A$, the claim then follows from split-exactness of $KK$.
Let a separable $C^*$-algebra $D$ and an increasing sequence of ideals $J_n$ in $D$ with $J_\infty=\oline{\bigcup_n J_n}$ be given.
We have the following diagram
\[
\xymatrix{
\varinjlim KK(SA,D/J_n) \cong \varinjlim KK(A,S(D/J_n)) \ar@<-1.5cm>[d] \ar@<1.5cm>[d] \\
KK(SA,D/J_\infty) \cong KK(A,S(D/J_\infty))
}
\]
with the horizontal isomorphisms obtained from Bott periodicity.
By naturality thereof, the diagram commutes.
As $A$ is $KK$-semiprojective, the map on the right is surjective, hence so must be the one on the left.
\end{proof}

\begin{lemma}\label{lem: technical}
Let $A$ be a separable, unital, nuclear, simple, $KK$-semiprojective $C^*$-algebra.
Then for every finite subset $\calF\subset A$ and every $\epsilon>0$, there exist a finite subset $\calG\subset A$ and $\delta>0$ satisfying the following:
For any unital Kirchberg algebra $B$, any unital $^*$-homomorphism $\varphi\colon A\to B$ and any unitary $u\in B$ with 
\[
 \|[\varphi(x),u]\|<\delta\quad\text{for all}\ x\in\calG,
\]
there exists a unital $^*$-homomorphism $\phi\colon C(\bbT)\otimes A\to B$ such that
\[
\|\phi(z\otimes 1)-u\|<\epsilon\quad\text{and}\quad\|\phi(1\otimes x)-\varphi(x)\|<\epsilon\quad\text{for all}\ x\in\calF.
\]
\end{lemma}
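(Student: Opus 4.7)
My plan is to argue by contradiction, using a sequence-algebra construction in the spirit of \cite{Dad09}. Assume the conclusion fails for some finite $\calF\subset A$ and $\epsilon>0$. Letting $\calG_n$ exhaust a countable dense subset of $A$ and taking $\delta_n=1/n$, one extracts sequences of unital Kirchberg algebras $B_n$, unital $^*$-homomorphisms $\varphi_n\colon A\to B_n$, and unitaries $u_n\in B_n$ such that $\|[\varphi_n(x),u_n]\|\to 0$ for every $x\in A$, yet no unital $\phi_n\colon C(\bbT)\otimes A\to B_n$ satisfies the $\epsilon$-approximation required by the conclusion. Setting $D=\prod_n B_n$ and $J=\bigoplus_n B_n$, the sequences $(\varphi_n)$ and $(u_n)$ assemble into a unital $^*$-homomorphism $\varphi_\infty\colon A\to D/J$ and a unitary $u_\infty\in D/J$ which exactly commutes with $\varphi_\infty(A)$; nuclearity of $A$ then combines them into a unital $^*$-homomorphism $\Phi\colon C(\bbT)\otimes A\to D/J$ with $\Phi(1\otimes a)=\varphi_\infty(a)$ and $\Phi(z\otimes 1)=u_\infty$.

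The next step is the $KK$-theoretic lift. Taking the increasing ideals $J_m=\{(b_k)\in D : b_k=0 \text{ for } k>m\}\subset D$, which satisfy $\oline{\bigcup_m J_m}=J$, Lemma~\ref{lem: suspension} upgrades $KK$-semiprojectivity of $A$ to that of $C(\bbT)\otimes A$ and so yields a $KK$-lift $\alpha\in KK(C(\bbT)\otimes A,D/J_m)$ of $[\Phi]$ for some $m$. Under the canonical identification $D/J_m\cong\prod_{k>m}B_k$, projection to the $k$-th factor produces $\alpha_k:=(\pi_k)_*\alpha\in KK(C(\bbT)\otimes A,B_k)$; since each $B_k$ is a unital Kirchberg algebra and the unital structure of $\Phi$ propagates through the lift, the Kirchberg--Phillips existence theorem realizes each $\alpha_k$ by a unital $^*$-homomorphism $\psi_k\colon C(\bbT)\otimes A\to B_k$.

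The main obstacle is to convert this $KK$-compatibility into norm approximation at finite level. The plan is to assemble the $\psi_k$ into $\Psi\colon C(\bbT)\otimes A\to\prod_{k>m}B_k$, pass to $\tilde\Psi=q\circ\Psi\colon C(\bbT)\otimes A\to D/J$, and verify $[\tilde\Psi]=[\Phi]$ in $KK(C(\bbT)\otimes A,D/J)$; a Kirchberg-type uniqueness theorem should then provide an asymptotic unitary equivalence of $\tilde\Psi$ and $\Phi$ implemented by a path of unitaries whose lift yields a sequence $(w_k)$ in $\prod_{k>m}B_k$. Reading off this equivalence componentwise, the maps $\phi_k:=\Ad(w_k)\circ\psi_k$ satisfy $\phi_k(1\otimes x)\approx_\epsilon\varphi_k(x)$ on $\calF$ and $\phi_k(z\otimes 1)\approx_\epsilon u_k$ for cofinally many $k$, contradicting the failure hypothesis. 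The delicate technical point is that $D/J$ is not itself a Kirchberg algebra, so uniqueness cannot be invoked black-box; instead one must transfer it to the factors $B_k$ using the structure of the $KK$-lift $\alpha$ and the simplicity and pure infiniteness of the $B_k$, which is precisely the heart of the argument in \cite[Thm.~2.6]{DE07} that this proof is following.
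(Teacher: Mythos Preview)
Your outline is essentially the same approach as the paper's: both reduce to the argument of \cite[Theorem 2.6]{DE07}, replacing the UCT input there by $KK$-semiprojectivity of $C(\bbT)\otimes A$ via Lemma~\ref{lem: suspension}. You have in fact written out more of the [DE07] machinery than the paper does, and correctly identified that the final uniqueness step (comparing $\tilde\Psi$ and $\Phi$ in the non-Kirchberg quotient $D/J$) is the place where the real work happens and where one must defer to \cite{DE07}.

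One point the paper flags that you do not: injectivity of $\Phi$ is not automatic, since the spectrum of $u_\infty$ need not be all of $\bbT$. The Kirchberg--Phillips uniqueness argument requires full (hence, into a simple target, injective) maps, and in your sketch $\Phi$ may have a kernel of the form $C_0(\bbT\setminus K)\otimes A$. The paper's fix is exactly the case split you omit: when the spectrum is dense one runs the argument as you describe, and otherwise one replaces $C(\bbT)\otimes A$ by $C(K)\otimes A$ for $K$ a finite union of closed arcs, noting that this algebra is still $KK$-semiprojective. This is a genuine, if minor, ingredient that your deferral to \cite{DE07} should be understood to include.
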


\begin{proof}
If the spectrum of the unitary $u$ is sufficiently dense in $\bbT$, the proof is basically identical to the one given in Theorem 2.6 of \cite{DE07} (which actually produces a proper $^*$-homomorphism $\phi$ in this case).
The only difference being that one uses $KK$-semiprojectivity of $C(\bbT)\otimes A$, i.e.\ Lemma \ref{lem: suspension}, instead of invoking the UCT.

For the general case, one can still proceed as in \cite{DE07} but has to replace $C(\bbT)\otimes A$ by $C(K)\otimes A$ with $K$ a finite union of (possibly degenerate) closed subintervals of $\bbT$.
All arguments carry over with only slight modifications, again replacing the UCT-assumptions by $KK$-semiprojectivity of $C(K)\otimes A$.
This gives a $^*$-homomorphism $\phi$ with the desired properties (which factors through $C(K)\otimes A$). 
\end{proof}

The next proposition ensures that a unitary can be connected to the identity in a way almost commuting with the image of a given $^*$-homomorphism, provided a certain $KK$-theoretic obstacle vanishes.
We are moreover interested in the length of such a path.
For this purpose, recall the notion of exponential length of a unital $C^*$-algebra $A$ from \cite{Rin92} which is defined as $\cel(A)=\sup_{u\in\calU_0(A)}\cel(u)$ with $\cel(u)$, the exponential length of a unitary $u\in\calU_0(A)$, given by
\[
\cel(u)=\inf\left\{\sum_{k=1}^n\left\|h_k\right\|\colon h_k\in A_{sa}, \quad u=e^{ih_1}e^{ih_2}\cdot ... \cdot e^{ih_n}\right\}.
\]
As shown in \cite{Rin92}, $\cel(u)$ coincides with the infimum of the lengths of rectifiable paths from $u$ to $1_A$ in $\calU(A)$, i.e.\ $\cel(A)$ describes the rectifiable diameter of $\calU_0(A)$.

\begin{proposition}\label{prop: KK-obstacle}
Let $A$ be a separable, unital, nuclear, simple $C^*$-algebra and $B$ a unital Kirchberg algebra.
Given a unital $^*$-homomorphism $\varphi\colon C(\bbT)\otimes A\to B$, the following holds:
If $[\varphi|_{SA}]=0$ in $KK(SA,B)$, then for any finite subset $\calF\subset A$ and $\epsilon>0$, there exists a rectifiable continuous path of unitaries $(u_t)_{t\in[0,1]}$ in $B$ with $u_0=1_B$ and $u_1=\varphi(z\otimes 1_A)$, such that
\[
  \|[\varphi(1\otimes x),u_t]\|<\epsilon\quad\text{for all}\ x\in\calF\ \text{and}\ t\in[0,1]
 \]
 and
 \[
  \Length\left((u_t)_{t\in[0,1]}\right)\leq 2\pi+\epsilon.
 \]
\end{proposition}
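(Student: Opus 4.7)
My plan is to closely follow the proof of \cite[Theorem 2.6]{DE07}, which establishes essentially the same conclusion under a UCT hypothesis on $A$. Three modifications are needed: (i) explicitly naming the $KK$-theoretic obstruction appearing in \cite{DE07} as $[\varphi|_{SA}] \in KK(SA,B)$; (ii) replacing UCT-based lifting arguments by invocations of Lemma \ref{lem: technical}, which uses the $KK$-semiprojectivity framework of \cite{Dad09}; and (iii) tracking the rectifiable length of the resulting unitary path.

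The first step is to reinterpret the hypothesis. Evaluation at $1 \in \bbT$ yields a split short exact sequence $0 \to SA \to C(\bbT)\otimes A \to A \to 0$, inducing a split exact sequence in $KK(-,B)$. The vanishing of $[\varphi|_{SA}]$ therefore implies $[\varphi] = [\psi]$ in $KK(C(\bbT)\otimes A, B)$, where $\psi(f \otimes a) := f(1)\varphi(1 \otimes a)$ is the natural comparison map satisfying $\psi(z \otimes 1) = 1_B$ and $\psi|_{1 \otimes A} = \varphi|_{1 \otimes A}$. The task thus becomes to deform $u := \varphi(z \otimes 1)$ to $1_B$ in a manner compatible with $\varphi|_{1 \otimes A}$.

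Next, I would partition $\bbT$ into $N$ arcs of length $2\pi/N$ and build, following \cite{DE07}, a chain of $^*$-homomorphisms $\varphi = \varphi_0, \varphi_1, \ldots, \varphi_N = \psi$ from $C(\bbT)\otimes A$ to $B$ in which successive maps differ by a small rotation of $\bbT$. Lemma \ref{lem: technical} is applied at each transition to realize the resulting almost-commuting data as a genuine $^*$-homomorphism, taking the place of the UCT-based argument in \cite{DE07}. The required path $(u_t)$ is then obtained by concatenating short exponential paths of unitaries between consecutive $\varphi_k(z \otimes 1)$, each of length roughly $2\pi/N$; summing yields total length $\leq 2\pi + \epsilon$.

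The main obstacle is to simultaneously control the length bound and the approximate commutation $\|[\varphi(1 \otimes x), u_t]\| < \epsilon$ uniformly over the $N$ steps. The $KK$-vanishing hypothesis provides the flexibility to align each $\varphi_k|_{1 \otimes A}$ with $\varphi|_{1 \otimes A}$ up to small errors, and Lemma \ref{lem: technical} provides the analytical lifting at each step. Managing the accumulation of these errors while retaining the length bound $2\pi + \epsilon$ requires choosing $N$ large and tolerances small at the outset; this combinatorial bookkeeping is the technically delicate part of the argument.
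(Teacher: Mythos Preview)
Your proposal diverges from the paper's argument and contains a genuine gap.

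First, you repeatedly invoke Lemma \ref{lem: technical}, but that lemma requires $A$ to be $KK$-semiprojective, a hypothesis which is \emph{not} present in Proposition \ref{prop: KK-obstacle}. The proposition is stated for an arbitrary separable, unital, nuclear, simple $A$; $KK$-semiprojectivity only enters later, in Theorem \ref{thm: shortening}, where it is used (via Lemma \ref{lem: technical}) to ensure that the obstruction $[\varphi|_{SA}]$ vanishes. Your appeal to Lemma \ref{lem: technical} inside the proof of the proposition is therefore not licensed.

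Second, the ``rotation of $\bbT$'' chain you sketch does not connect $\varphi(z\otimes 1)$ to $1_B$. Precomposing $\varphi$ with rotations of $\bbT$ only multiplies $\varphi(z\otimes 1)$ by scalars, so a full rotation returns you to the starting unitary rather than to $1_B$. The comparison map $\psi$ you (correctly) identify in your first step is not a rotate of $\varphi$, and your proposal provides no actual mechanism to interpolate between them while keeping the unitary commuting with $\varphi(1\otimes A)$ and controlling length; that is precisely the content to be proved.

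The paper's argument is entirely different and does not use Lemma \ref{lem: technical}. One fixes an identification $B\cong B\otimes\calO_\infty$ and observes that the hypothesis $[\varphi|_{SA}]=0$ forces $[\varphi]=[\theta\otimes\psi]$ in $KK(C(\bbT)\otimes A,B)$ for suitable unital $^*$-monomorphisms $\theta\colon C(\bbT)\to\calO_\infty$ and $\psi\colon A\to B$. Kirchberg--Phillips classification then makes $\varphi$ approximately unitarily equivalent to $\theta\otimes\psi$. The required path is obtained (up to small error) as a unitary conjugate of a path in $1_B\otimes\calO_\infty$ from $1_B\otimes\theta(z)$ to $1_B\otimes 1_{\calO_\infty}$; such a path automatically commutes with $\psi(A)\otimes 1$, and its length is bounded because $\cel(\calO_\infty)=2\pi$ by \cite{Phi02}. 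Thus the length bound $2\pi+\epsilon$ is a structural fact about $\calO_\infty$, not the outcome of a discretize-and-sum argument.
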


\begin{proof}
We outline the proof given in the second part of \cite[Theorem 2.6]{DE07}:
We may assume that $\varphi$ is injective.
 Fix an identification $B\cong B\otimes\calO_\infty$, then the hypothesis guarantees that the class $[\varphi]$ in $KK(C(\bbT)\otimes A,B)$ is equal to the class $[\theta\otimes\psi]$ for some unital $^*$-monomorphisms $\theta\colon C(\bbT)\to\calO_\infty$ and $\psi\colon A\to B$.
By the classification results of Kirchberg and Phillips, $\varphi$ is approximately unitarily equivalent to $\theta\otimes\psi$ and hence a path of unitaries with the desired properties
can approximately be obtained as a unitary conjugate of a path of unitaries in $1_B\otimes\calO_\infty$ connecting $1_B\otimes\theta(z)$ to $1_B\otimes1_{\calO_\infty}$.
The bound on the length of such a path follows from $\cel(\calO_\infty)=2\pi$, which is shown in \cite{Phi02}.
\end{proof}

We now come to the version of the Basic Homotopy Lemma which will be most suitable for our purposes.
In fact, what we need is an argument to shorten approximately central unitary paths.
Such a statement can be obtained from Proposition \ref{prop: KK-obstacle} as follows:
If we already assume the existence of some approximately central path (of any length) connecting the unitary to the identity, the $KK$-theoretic obstacle in \ref{prop: KK-obstacle} will automatically vanish and therefore the conclusion provides us with a new approximately central path with controlled length. 

This shortening argument is closely related to the question for the exponential length of central sequence algebras. 
We will make this precise later on in \ref{cor: central sequence algebra}.\\

Throughout the rest of this paper, we will use the following terminology:
Given a class $\calC$ of $C^*$-algebras, we say that a $C^*$-algebra $A$ can be locally approximated by elements of $\calC$ if for every finite subset $\calF$ of $A$ and every $\epsilon>0$ there exists a $C^*$-subalgebra $A_0$ of $A$ with $\dist(\calF,A_0)<\epsilon$ such that $A_0\cong C$ for some $C\in\calC$.

Note that the the class of $C^*$-algebras $A$ covered by the following theorem includes all unital AF-algebras as well as all unital Kirchberg algebras in the UCT-class.

\begin{theorem}\label{thm: shortening}
 Let $A$ be a unital $C^*$-algebra that can be locally approximated by finite direct sums of separable, unital, nuclear, simple, $KK$-semiprojective $C^*$-algebras.
 Then for any $\epsilon>0$ and every finite subset $\calF\subset A$, there are $\delta>0$ and a finite subset $\calG\subset A$ such that the following holds:
 
 Given a unital Kirchberg algebra $B$, a unital $^*$-homomorphism $\varphi\colon A\to B$ and a continuous path of unitaries $(u_t)_{t\in[0,1]}$ in $B$ with $u_0=1_B$ such that
 \[
  \left\|\left[\varphi(x),u_t\right]\right\|<\delta\quad\text{for all}\ x\in\calG\ \text{and}\ t\in[0,1],
 \]
 there exists a rectifiable continuous path of unitaries $(u'_t)$ in $B$ from $u'_0=1_B$ to $u'_1=u_1$ such that
 \[
  \left\|\left[\varphi(x),u'_t\right]\right\|<\epsilon\quad\text{for all}\ x\in\calF\ \text{and}\ t\in[0,1],
 \]
 and
 \[
  \Length\left((u'_t)_{t\in[0,1]}\right)\leq 2\pi+\epsilon.
 \]
\end{theorem}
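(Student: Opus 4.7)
The strategy is to package the pair $(\varphi, u_1)$ as a $^*$-homomorphism $\phi\colon C(\bbT)\otimes A\to B$ via Lemma \ref{lem: technical}, verify that the $KK$-theoretic obstruction $[\phi|_{SA}]$ appearing in Proposition \ref{prop: KK-obstacle} vanishes (this is where the hypothesis that $u_1$ lies at the end of an approximately central path from $1_B$ enters), and invoke that proposition to produce a short path ending near $u_1$; a small correction will then reach $u_1$ exactly. Using the local approximation hypothesis, together with the fact that both Lemma \ref{lem: technical} and Proposition \ref{prop: KK-obstacle} split across finite direct sums and that $KK$ is additive, we may reduce at the cost of small constants to the case where $A$ itself is separable, unital, nuclear, simple, and $KK$-semiprojective.

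Pick $(\calG_1,\delta_1)$ as produced by Lemma \ref{lem: technical} applied to the data $(\calF,\epsilon/8)$, and take the output $\calG\supseteq\calG_1$ and $\delta\leq\delta_1$, both to be shrunk or enlarged finitely many times below. Applied to the terminal pair $(\varphi,u_1)$, Lemma \ref{lem: technical} yields $\phi\colon C(\bbT)\otimes A\to B$ with $\|\phi(z\otimes 1)-u_1\|<\epsilon/8$ and $\|\phi(1\otimes x)-\varphi(x)\|<\epsilon/8$ for $x\in\calF$.

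The heart of the argument is showing $[\phi|_{SA}]=0$ in $KK(SA,B)$. Choose a partition $0=t_0<t_1<\ldots<t_N=1$ fine enough that consecutive differences $\|u_{t_k}-u_{t_{k-1}}\|$ lie below the perturbation threshold for $KK$-stability of $^*$-homomorphisms out of $C(\bbT)\otimes A$. At $t=0$, define $\phi_0(f\otimes x):=f(1)\varphi(x)$; this factors through evaluation at $1\in\bbT$, giving $[\phi_0|_{SA}]=0$, and it satisfies the conclusion of Lemma \ref{lem: technical} for $(\varphi,1_B)$. For each intermediate $t_k$, apply Lemma \ref{lem: technical} to obtain $\phi_k$ matching $(\varphi,u_{t_k})$. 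Because consecutive unitaries are close, $\phi_{k-1}$ and $\phi_k$ agree on generators up to a small error and thus represent the same class; telescoping yields $[\phi|_{SA}]=[\phi_N|_{SA}]=\ldots=[\phi_0|_{SA}]=0$.

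With the obstruction killed, Proposition \ref{prop: KK-obstacle} produces a rectifiable path $(v_t)_{t\in[0,1]}$ from $1_B$ to $\phi(z\otimes 1)$ of length at most $2\pi+\epsilon/2$ with $\|[\phi(1\otimes x),v_t]\|$ small, which transfers via $\phi(1\otimes x)\approx\varphi(x)$ to $\|[\varphi(x),v_t]\|<\epsilon/2$ on $\calF$. Concatenating with a short logarithmic path $t\mapsto\phi(z\otimes 1)\exp(ith)$ from $\phi(z\otimes 1)$ to $u_1$, where $\phi(z\otimes 1)^*u_1=\exp(ih)$ with $\|h\|<\epsilon/4$, gives the desired $(u'_t)$ with total length below $2\pi+\epsilon$ and with approximate commutation on $\calF$ preserved throughout. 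The main subtlety I expect is the parameter bookkeeping: $\delta$ and $\calG$ must be fixed in advance of the path, whereas the partition $N$ depends on $(u_t)$; this reconciles because the conclusion $[\phi|_{SA}]=0$ is qualitative, depending only on the existence (not any uniform length bound) of an approximately central path from $u_1$ to $1_B$.
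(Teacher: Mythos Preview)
Your overall strategy matches the paper's: reduce to $A$ separable, unital, nuclear, simple and $KK$-semiprojective; use Lemma~\ref{lem: technical} to pass from $(\varphi,u_1)$ to a $^*$-homomorphism $\phi\colon C(\bbT)\otimes A\to B$; verify $[\phi|_{SA}]=0$; apply Proposition~\ref{prop: KK-obstacle}; correct the endpoint. The difference lies in how you verify $[\phi|_{SA}]=0$, and there your argument has a gap.

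You partition the path, produce $\phi_k$'s via Lemma~\ref{lem: technical}, and assert that consecutive $\phi_k$'s share the same $KK$-class because they agree on generators up to small error---invoking a ``perturbation threshold for $KK$-stability of $^*$-homomorphisms out of $C(\bbT)\otimes A$''. Such a threshold does exist, but only because $C(\bbT)\otimes A$ is itself $KK$-semiprojective (Lemma~\ref{lem: suspension}); for general separable $C^*$-algebras there is no such threshold, so this cannot be taken for granted. You assert it without justification, and proving it is essentially the content of the step you are trying to carry out. There is also a bookkeeping issue you gloss over: the stability threshold comes with its own finite test set $\calH\subset C(\bbT)\otimes A$, and you must feed the $A$-part of $\calH$ (not just $\calF$) and a tolerance below the threshold into Lemma~\ref{lem: technical} \emph{before} fixing $(\calG,\delta)$, so that the resulting $\phi_k$'s are guaranteed to be close on $\calH$.

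The paper's route is more direct and sidesteps the partitioning entirely. It introduces the universal soft tensor product $C(\bbT)\otimes_{\delta_0}A$, generated by a copy of $A$ and a unitary $z_{\delta_0}$ subject only to $\|[a_n,z_{\delta_0}]\|\leq\delta_0$ for a fixed generating sequence $(a_n)$ of $A$, and uses $KK$-semiprojectivity of $C(\bbT)\otimes A$ to lift $[\id_{C(\bbT)\otimes A}]$ to some $x\in KK(C(\bbT)\otimes A,\,C(\bbT)\otimes_{\delta_0}A)$. The parameter $\delta_0$ depends only on $A$, so one may enlarge $\calG$ to contain $\{a_n:\|a_n\|\geq\delta_0/2\}$ and take $\delta<\delta_0$. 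Then the \emph{original} path $(u_t)$ already defines a genuine homotopy of $^*$-homomorphisms $C(\bbT)\otimes_{\delta_0}A\to B$ between $\phi\circ\pi$ and $(1\otimes\varphi)\circ\pi$; pushing forward along $x$ yields $[\phi]=[1\otimes\varphi]$ in one stroke. This single application of $KK$-semiprojectivity simultaneously supplies the justification for the threshold you need and replaces your partition-and-telescope argument with a direct homotopy.
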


\begin{proof}
It suffices to prove the statement for $A$ a separable, unital, nuclear, simple, $KK$-semiprojective $C^*$-algebra.
By Lemma \ref{lem: technical}, we may further assume that $u_1$ commutes with the image of $\varphi$ and hence defines a unital $^*$-homomorphism $\phi\colon C(\bbT)\otimes A\to B$ via $\phi(z\otimes 1)=u_1$ and $\phi|_{1\otimes A}=\varphi$.
Now the claim will follow from Proposition \ref{prop: KK-obstacle}, once we can show that the class $[\phi|_{SA}]$ vanishes in $KK(SA,B)$.
For this we exploit $KK$-semiprojectivity of $C(\bbT)\otimes A$, see \ref{lem: suspension}, and the existence of the homotopy $(u_t)$ as follows.

Let $\{a_n\colon n\in\bbN\}$ be a generating set for $A$ with $\lim_n \|a_n\| = 0$ and define $C(\bbT)\otimes_\delta A$ to be the universal unital $C^*$-algebra generated by a copy of $A$ and a unitary $z_\delta$ such that $\|[a_n,z_\delta]\|\leq\delta$ for all $n$.
The natural quotient maps $C(\bbT)\otimes_\delta A\to C(\bbT)\otimes_{\delta'} A$ for $\delta>\delta'$ given by $u_\delta\mapsto u_{\delta'}$ form an inductive system the limit of which we identify with $C(\bbT)\otimes A$.
By $KK$-semiprojectivity of $C(\bbT)\otimes A$, there exists $\delta_0>0$ and an element $x\in KK(C(\bbT)\otimes A,C(\bbT)\otimes_{\delta_0} A)$ with $\pi_*(x)=[\id_{C(\bbT)\otimes A}]$, where $\pi\colon C(\bbT)\otimes_{\delta_0} A\to C(\bbT)\otimes A$ denotes the canonical map.

As $\delta_0$ is independent of $\calF$ and $\epsilon$, we may assume that $\calG$ contains the finite set $\{a_n\colon \|a_n\|\geq\delta_0/2\}$ and that $\delta$ was chosen to be smaller than $\delta_0$.
Then $\|[\varphi(a_n),u_t]\|\leq\delta_0$ holds for all $n$ and $t\in[0,1]$, hence the path of unitaries $(u_t)_{t\in[0,1]}$ gives rise to a homotopy of $^*$-homomorphisms from $\phi\circ\pi$ to $(1\otimes\varphi)\circ\pi$.
This shows $[\phi]=(\phi\circ\pi)_*(x)=((1\otimes\varphi)\circ\pi)_*(x)=[1\otimes\varphi]$ in $KK(C(\bbT)\otimes A,B)$ and therefore $[\phi|_{SA}]=0$ in $KK(SA,B)$.
\end{proof}

The shortening argument of \ref{thm: shortening} can used to manipulate paths of unitaries in central sequence algebras (and algebras alike, cf.\ the proof of Lemma \ref{lem: lifting unitary paths}).
For instance, one can control the exponential length of these algebras.

\begin{corollary}\label{cor: central sequence algebra}
The central sequence algebra of any unital $KK$-semiprojective Kirchberg algebra $A$ has finite exponential length.
More precisely, we have
\[
\cel(A_\infty\cap A')=2\pi.
\]
The same statement holds for any unital Kirchberg algebra which satisfies the UCT.
\end{corollary}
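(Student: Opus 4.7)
The substantive content is the upper bound $\cel(A_\infty\cap A')\leq 2\pi$, obtained by applying Theorem \ref{thm: shortening} coordinate-wise to a lift of a unitary path; the matching lower bound follows from Phillips' computation in \cite{Phi02} combined with the existence of a unital central copy of $\calO_\infty$ inside the sequence algebra.

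For the upper bound, take $v\in\calU_0(A_\infty\cap A')$ and fix a continuous path $(v_s)_{s\in[0,1]}$ in $A_\infty\cap A'$ from $1$ to $v$. Lift it to a family of continuous paths $(v_{n,s})_{s\in[0,1]}$ of unitaries in $A$ starting at $v_{n,0}=1_A$ that is asymptotically central, i.e., for every $a\in A$ and every $\epsilon>0$, $\sup_s\|[a,v_{n,s}]\|<\epsilon$ eventually in $n$. Apply Theorem \ref{thm: shortening} to $A$ (with $\varphi=\id_A$ and $B=A$) along an exhausting sequence $\calF_1\subset\calF_2\subset\ldots$ of finite subsets of $A$ with tolerances $\epsilon_k=1/k$, yielding data $(\calG_k,\delta_k)$. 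Choose indices $n_k\nearrow\infty$ so that $(v_{n,s})_s$ is $\delta_k$-central on $\calG_k$ for every $n\geq n_k$; the theorem then replaces each such path by a rectifiable path $(v'_{n,s})_s$ from $1_A$ to $v_{n,1}$ of length at most $2\pi+1/k$ which is $1/k$-central on $\calF_k$. Setting $w_{n,s}=v'_{n,s}$ whenever $n_k\leq n<n_{k+1}$ assembles a family whose image in $A_\infty\cap A'$ is a rectifiable continuous path from $1$ to $v$ of length at most $\limsup_n\Length((w_{n,s})_s)\leq 2\pi$.

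For the matching lower bound, iterated $\calO_\infty$-absorption $A\cong A\otimes\calO_\infty$ produces a unital $^*$-embedding $\calO_\infty\hookrightarrow A_\infty\cap A'$ into the central sequence algebra, from which $\cel\geq 2\pi$ can be extracted using Phillips' result \cite{Phi02}. For a unital UCT-Kirchberg algebra $A$, the same proof applies after noting that $A$ is locally approximated by its unital UCT-Kirchberg subalgebras with finitely generated $K$-theory, each of which is $KK$-semiprojective, so that Theorem \ref{thm: shortening} remains directly applicable.

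The hard part will be the diagonal/pasting step in the upper bound: the shortenings $(v'_{n,s})_s$, produced independently for each $n\geq n_k$, have to be glued across the block boundaries $n=n_k$ into a single family whose image in the sequence algebra is genuinely continuous in $s$, and whose length in $A_\infty\cap A'$ is controlled by $\limsup_n\Length((w_{n,s})_s)$. The length estimate itself is the standard partition-sum inequality $\sup_P\sum_i\limsup_n\|w_{n,s_{i+1}}-w_{n,s_i}\|\leq\limsup_n\sup_P\sum_i\|w_{n,s_{i+1}}-w_{n,s_i}\|$, but the gluing requires a careful matching of the exhaustion $(\calF_k)$ and of the centrality tolerances $1/k$ to the asymptotic centrality rates of the originally lifted paths.
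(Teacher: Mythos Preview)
Your upper-bound argument is essentially the paper's: lift the path coordinatewise, apply Theorem~\ref{thm: shortening} in each coordinate, and reassemble in $A_\infty\cap A'$. The ``hard part'' you flag, however, is not a gluing problem at all---the paper dispatches it with a one-line device you are missing. Once the shortened paths $(w_{n,s})_s$ have length at most $2\pi+\epsilon_n$ with $\epsilon_n\to 0$, pad each by a scalar arc so that its length is at least $2\pi$, and then \emph{reparametrize each by arclength}. Every coordinate path is now $1$-Lipschitz on an interval of length $2\pi+\epsilon_n$, so the family is equicontinuous; hence $s\mapsto[(w_{n,2\pi s})_n]$ is automatically a continuous path in $A_\infty\cap A'$ from $1$ to $v$ of length $\leq 2\pi$ (the overshoot $\epsilon_n$ disappears in the quotient). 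No matching across block boundaries $n=n_k$ is required, and your partition-sum inequality becomes unnecessary once equicontinuity is in hand.

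Your lower-bound argument, on the other hand, has a genuine gap: a unital embedding $\calO_\infty\hookrightarrow B$ does not by itself yield $\cel(B)\geq\cel(\calO_\infty)$, since unitaries from the subalgebra may admit shorter connecting paths in the ambient algebra. Phillips' computation $\cel(\calO_\infty)=2\pi$ does not transfer along inclusions without further input (a $K$-theoretic obstruction that survives the embedding, a retraction, or the like). In fact the paper's own proof only exhibits, for each $U_1\in\calU_0(A_\infty\cap A')$, a connecting path of length exactly $2\pi$; it establishes $\cel\leq 2\pi$ but does not separately argue the reverse inequality. So your instinct that something extra is needed for ``$\geq$'' is correct, but the $\calO_\infty$-embedding alone does not supply it. Your treatment of the UCT case via local approximation by finitely generated UCT-Kirchberg subalgebras matches the paper's use of \cite[Proposition~8.4.13]{Ror02}.
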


\begin{proof}
Let $U_1=[(u^{(1)}_1,u^{(2)}_1,...)]$ be an element of $\calU_0(A_\infty\cap A')$ and $(U_t)_{t\in[0,1]}$ a continuous path of unitaries connecting it to $U_0=1_{A_\infty\cap A'}$.
This means we can find a sequence of continuous paths of unitaries $(u^{(n)}_t)_{t\in[0,1]}$ in $A$ from $u^{(n)}_1$ to $u^{(n)}_0=1_A$ such that $U_t=[(u^{(1)}_t,u^{(2)}_t,...)]$ for all $n$ and $t\in[0,1]$.
As the path $(U_t)$ is contained in the central sequence algebra, we have $\lim_{n\to\infty}\max_{t\in[0,1]}\|[a,u^{(n)}_t]\|=0$ for every $a\in A$. 
We may therefore apply Theorem \ref{thm: shortening} and replace the paths $(u^{(n)}_t)$ by continuous unitary paths $(\oline{u}^{(n)}_t)_{t\in[0,1]}$ from $\oline{u}^{(n)}_1=u^{(n)}_1$ to $\oline{u}^{(n)}_0=1_A$ which are still asymptotically central but further satisfy $\lim_{n\to\infty}\Length((\oline{u}^{(n)}_t)\colon t\in[0,1])\leq2\pi$.
Adding scalar-valued pieces if necessary, we may also assume $\Length((\oline{u}^{(n)}_t)\colon t\in[0,1])\geq2\pi$ for all $n$. 
After length parametrizing the paths $(\oline{u}^{(n)}_t)$, we find $\oline{U}_t=[(\oline{u}^{(1)}_{2\pi t},\oline{u}^{(2)}_{2\pi t},...)]$, $0\leq t\leq 1$, to be a now continuous path of unitaries in $A_\infty\cap A'$ from $\oline{U}_1=U_1$ to $\oline{U}_0=1_{A_\infty\cap A'}$ with $\Length((\oline{U}_t)\colon t\in[0,1])=2\pi$.

Any unital Kirchberg algebra in the UCT-class can be realized as an inductive limit over unital UCT-Kirchberg algebras with finitely generated $K$-groups, see \cite[Proposition 8.4.13]{Ror02}. As these are $KK$-semiprojective by \cite{RS87}, the claim follows from the first part of the corollary. 
\end{proof}

If the $C^*$-algebra $A$ in Proposition \ref{prop: KK-obstacle} satisfies the UCT and moreover $K_*(A)$ is finitely generated, one can use the natural isomorphism 
\[
KK(SA,B)\cong\Hom_\Lambda(\underline{K}(SA),\underline{K}(B))
\]
 from the UMCT of \cite{DL96} to give  a description of the obstruction in terms of (total) $K$-theory.
This leads to the following 'typical' version of the Basic Homotopy Lemma, which uses the language of \cite{Lin10}.
We only state it here for the sake of completeness and leave it to the reader to verify, using \cite[Proposition 2.4]{Lin10} and the above-mentioned isomorphism, that in this case the Bott obstruction defined in \cite{Lin10} coincides with the $KK$-theoretic obstacle appearing in Proposition \ref{prop: KK-obstacle}.

\begin{theorem}
 Let $A$ be a unital $C^*$-algebra that can be locally approximated by finite direct sums of separable, unital, nuclear, simple $C^*$-algebras in the UCT-class with finitely generated $K$-groups.
 Then for any $\epsilon>0$ and every finite subset $\calF\subset A$, there are $\delta>0$, a finite subset $\calG\subset A$ and a finite subset $\calP\subset\uline{K}(A)$ such that the following holds:
 
 Suppose $B$ is a unital Kirchberg algebra, $\varphi\colon A\to B$ is a unital $^*$-homomorphism and $u$ is a unitary in $B$ such that
 \[
  \|[\varphi(x),u]\|<\delta\quad\text{for all}\ x\in\calG\ \text{and}\ \Bott(\varphi,u)_{|\calP}=\{0\}.
 \]
 Then there exists a rectifiable continuous path of unitaries $(u_t)$ from $u_0=u$ to $u_1=1_B$ such that
 \[
  \|[\varphi(x),u_t]\|<\epsilon\quad\text{for all}\ x\in\calF\ \text{and}\ t\in[0,1],
 \]
 and
 \[
  \Length\left((u_t)_{t\in[0,1]}\right)\leq 2\pi+\epsilon.
 \]
\end{theorem}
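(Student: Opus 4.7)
My plan is to run the argument of Theorem \ref{thm: shortening} essentially verbatim, with the only substantive change being the replacement of the ``approximately central homotopy'' hypothesis (used there only to kill a $KK$-class) by the Bott obstruction hypothesis. The assumption that $A$ can be locally approximated by finite direct sums of separable, unital, nuclear, simple UCT algebras with finitely generated $K$-groups reduces the problem, as in Theorem \ref{thm: shortening}, to the case that $A$ itself is such a building block. By \cite[Theorem 1.14, Proposition 7.13]{RS87} any such $A$ is automatically $KK$-semiprojective, so Lemmas \ref{lem: technical} and \ref{lem: suspension} as well as Proposition \ref{prop: KK-obstacle} are available.

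First I would apply Lemma \ref{lem: technical} (with tolerance $\epsilon/4$) to produce a first pair $(\calG_1,\delta_1)$ such that, whenever $\|[\varphi(x),u]\|<\delta_1$ for all $x\in\calG_1$, one can perturb $u$ to a unitary $v\in B$ strictly commuting with $\varphi(A)$ and satisfying $\|v-u\|<\epsilon/4$. This perturbed pair defines a unital $^*$-homomorphism $\phi\colon C(\bbT)\otimes A\to B$ via $\phi(z\otimes 1)=v$ and $\phi|_{1\otimes A}=\varphi$.

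The core step is to extract a finite subset $\calP\subset\uline{K}(A)$ whose annihilation by the Bott map forces $[\phi|_{SA}]=0$ in $KK(SA,B)$. By the UMCT of \cite{DL96} one has a natural isomorphism $KK(SA,B)\cong\Hom_\Lambda(\uline{K}(SA),\uline{K}(B))$; since $\uline{K}(SA)$ is finitely generated, the right-hand side is determined by the values of a $\Lambda$-homomorphism on a finite subset $\calP_0\subset\uline{K}(SA)$. Transport $\calP_0$ to a finite subset $\calP\subset\uline{K}(A)$ via Bott periodicity. By \cite[Proposition 2.4]{Lin10}, the Bott map $\Bott(\varphi,v)$ restricted to $\calP$ agrees precisely with $[\phi|_{SA}]$ restricted to $\calP_0$. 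Now I would shrink $\delta$ and enlarge $\calG$ (absorbing $(\calG_1,\delta_1)$) in a way dictated by the continuity of the Bott map, so that $\Bott(\varphi,u)|_\calP=\Bott(\varphi,v)|_\calP$ whenever the hypotheses hold. The Bott obstruction hypothesis on $(\varphi,u)$ then transfers to $(\varphi,v)$, yielding $[\phi|_{SA}]=0$.

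With the $KK$-obstacle eliminated, Proposition \ref{prop: KK-obstacle} applied to $\phi$ produces a rectifiable continuous path $(w_t)_{t\in[0,1]}$ of unitaries from $w_0=1_B$ to $w_1=v$, approximately central with respect to $\varphi(\calF)$ within $\epsilon/2$, and of length at most $2\pi+\epsilon/2$. Reversing the time parameter and concatenating with a short exponential segment from $v$ back to $u$ (of length comparable to $\|v-u\|<\epsilon/4$) gives the desired path $(u_t)$ from $u_0=u$ to $u_1=1_B$ with the required commutator estimate for $\calF$ and length at most $2\pi+\epsilon$. The main technical obstacle I expect is the bookkeeping of the third paragraph, namely verifying that one finite $\calP$ and one pair $(\calG,\delta)$ suffice uniformly to transfer the Bott obstruction hypothesis across the Lemma \ref{lem: technical}-perturbation; this is where the identification from \cite[Proposition 2.4]{Lin10} does the real work, and it is also precisely the verification the author defers to the reader.
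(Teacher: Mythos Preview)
Your proposal is correct and follows exactly the route the paper indicates: reduce to a single building block, invoke Lemma \ref{lem: technical} to pass to an exact $^*$-homomorphism $\phi\colon C(\bbT)\otimes A\to B$, use the UMCT isomorphism together with \cite[Proposition 2.4]{Lin10} to identify the Bott obstruction with the $KK$-obstacle of Proposition \ref{prop: KK-obstacle}, and then conclude. The paper itself gives no more than this outline and explicitly leaves to the reader precisely the bookkeeping you flag in your last paragraph; your write-up is in fact more detailed than the paper's.
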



\section{Lifting asymptotic unitary equivalences}\label{sec: 3}

We start this section by studying the following $C^*$-algebraic lifting problem:
Given a quotient map $B\to B/I$ and a unitary $u\in B/I$ which is approximately central (in the sense that it almost commutes with a large finite subset of $B/I$), can we find a unitary lift $\oline{u}$ of $u$ which is approximately central in $B$?

As it turns out, it is not enough to just require the unitary $u$ to be connected to the unit of $B/I$ in an approximately central way, but one needs to further put a bound on the rectifiable length of such homotopies in order to obtain a positive lifting result, see Lemma \ref{lem: lifting unitary paths}.

While homotopy arguments are known to play an important role in the study of lifting problems for $C^*$-algebras, a phenomenon best illustrated by the main result of \cite{Bla12}, the length of homotopies has not been relevant here so far.
Lemma \ref{lem: lifting unitary paths} therefore provides a new ingredient to this field, which is of particular importance for stability questions concerning commutation relations.

\begin{lemma}\label{lem: lifting unitary paths}
Let $L>0$, $n\in\bbN$ and $\epsilon>0$ be given, then there exists $\delta>0$ with the following properties:

Suppose $B$ is a unital $C^*$-algebra with ideal $I$, $\pi\colon B\to B/I$ the corresponding quotient map and $\{x_1,...,x_n\}$ a set of contractions in $B$. 
If $(u_t)_{t\in[0,1]}\subset B/I$ is a rectifiable path of unitaries of length at most $L$ with $u_0=1$ such that
 \[
  \|[\pi(x_i),u_t]\|<\delta\quad\text{for all}\ i=1,...,n\ \text{and all}\ t\in[0,1],
 \]
 then the path $(u_t)_{t\in[0,1]}$ lifts to a path of unitaries $(\oline{u}_t)_{t\in[0,1]}$ in $B$ with
 \[
  \|[x_i,\oline{u}_t]\|<\epsilon\quad\text{for all}\ i=1,...,n\ \text{and all}\ t\in[0,1].
 \]
\end{lemma}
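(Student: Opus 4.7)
The plan is to leverage the length bound on $(u_t)$ decisively: since the path has length at most $L$, we can cut $[0,1]$ into $N$ intervals $0 = t_0 < t_1 < \cdots < t_N = 1$ on each of which $(u_t)$ has length at most $L/N$, so that $u_tu_{t_k}^{-1}$ stays in the $(L/N)$-ball around $1_{B/I}$ throughout $[t_k,t_{k+1}]$. For $N$ chosen large enough (depending only on $L$), the spectrum of $u_tu_{t_k}^{-1}$ avoids $-1$ uniformly, so $h_k(t):=-i\log(u_tu_{t_k}^{-1})$ is a well-defined continuous self-adjoint path in $B/I$ with $h_k(t_k)=0$ and $\|h_k(t)\| \leq C_0 L/N$. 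The hypothesis $\|[\pi(x_i),u_t]\|<\delta$ gives $\|[\pi(x_i), u_tu_{t_k}^{-1}]\| < 2\delta$, and Lipschitz continuity of $-i\log$ on the relevant arc then yields $\|[\pi(x_i),h_k(t)]\|\leq C_1\delta$ uniformly in $t,k,i$.

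The heart of the proof is a continuous, approximately central, self-adjoint lift of each $h_k$. Starting from any continuous self-adjoint section $\tilde{h}_k\colon[t_k,t_{k+1}]\to B_{sa}$ with $\tilde{h}_k(t_k)=0$ (produced by Bartle--Graves followed by functional-calculus truncation at the fixed level $C_0L/N$), choose also a continuous section $a_i(t)$ of $[\pi(x_i),h_k(t)]$ with $\|a_i(t)\| \leq \|[\pi(x_i),h_k(t)]\| + \eta$ and set $m_i(t):=[x_i,\tilde{h}_k(t)] - a_i(t) \in I$. The set $\{m_i(t): t\in[t_k,t_{k+1}],\ i\leq n\}$ is then a norm-compact subset of $I$. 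Picking a sufficiently large $e$ in a quasi-central approximate unit for $I\subset B$ that uniformly absorbs this compact set, define $\bar{h}_k(t) := (1-e)^{1/2}\tilde{h}_k(t)(1-e)^{1/2}$. Quasi-centrality plus continuous functional calculus yields $\|[(1-e)^{1/2},x_i]\|<\eta$, and the standard fact $\|(1-e)^{1/2}m(1-e)^{1/2}\|\to 0$ for $m\in I$ (applied uniformly over the compact family $\{m_i(t)\}$) then gives $\|[x_i,\bar{h}_k(t)]\|\leq\|[\pi(x_i),h_k(t)]\|+O(\eta)$. By construction $\bar{h}_k$ is a continuous self-adjoint lift of $h_k$ with $\bar{h}_k(t_k)=0$ and $\|\bar{h}_k(t)\|\leq C_0 L/N$.

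With these lifts in hand, set $\bar{u}_0:=1_B$ and inductively $\bar{u}_t:=\exp(i\bar{h}_k(t))\bar{u}_{t_k}$ for $t\in[t_k,t_{k+1}]$. The pieces match at partition points because $\bar{h}_k(t_k)=0$, producing a continuous unitary lift of $(u_t)$, and the Leibniz rule together with the standard estimate $\|[x,\exp(ih)]\|\leq\|[x,h]\|e^{\|h\|}$ gives, by telescoping,
\[
 \|[x_i,\bar{u}_t]\| \;\leq\; \sum_{j=0}^{N-1}\max_{s\in[t_j,t_{j+1}]}\|[x_i,\exp(i\bar{h}_j(s))]\| \;\leq\; \sum_{j=0}^{N-1}\max_{s}\|[x_i,\bar{h}_j(s)]\|\,e^{\|\bar{h}_j(s)\|},
\]
which is bounded by a constant multiple of $N(C_1\delta+\eta)e^{C_0L/N}$. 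Fixing $N$ first so that the local logarithm is available and the exponential factor is $O(1)$, and then choosing $\delta$ and $\eta$ so that $N(C_1\delta+\eta)$ is small, forces this below $\epsilon$.

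The main obstacle is the middle step: arranging an approximately central continuous self-adjoint lift that simultaneously controls the commutators with all of $x_1,\ldots,x_n$, uniformly in $t$, while retaining a norm bound of the order of $\|h_k\|$. The quasi-central unit combined with the $(1-e)^{1/2}$-compression is the natural device, and the key subtlety is that $e$ must be chosen \emph{after} $\tilde{h}_k$ so that a single $e$ absorbs the $t$-varying ideal remainders $m_i(t)$; this is exactly what the compactness argument above provides.
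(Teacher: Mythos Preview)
Your argument is correct and genuinely different from the paper's. The paper proceeds by contradiction: assuming counterexamples exist for every $\delta=1/m$, it assembles them into a product-type $C^*$-algebra $B\subset\prod_m B_m$ of sequences asymptotically commuting with the $x_i^{(m)}$, and uses the uniform length bound to length-parametrize the paths so that the product path $t\mapsto (u_t^{(m)})_m$ is \emph{continuous} in the corresponding quotient; lifting this single path then yields componentwise lifts contradicting the assumption. The length bound thus enters only as a device to secure equicontinuity.

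Your approach is a direct construction: the length bound lets you subdivide into $N$ pieces on which the logarithm is available with a uniform norm bound, and you then lift each self-adjoint logarithm approximately centrally via compression by $(1-e)^{1/2}$ with $e$ quasi-central. This is more hands-on and gives an explicit dependence of $\delta$ on $(L,n,\epsilon)$, whereas the paper's argument is slicker and avoids all estimates beyond the equicontinuity step. One small remark: the continuous section $a_i(t)$ with $\|a_i(t)\|\leq\|[\pi(x_i),h_k(t)]\|+\eta$ that you invoke does exist (take any continuous lift and correct by a partition-of-unity combination of ideal elements), but you do not actually need it. It is enough to observe directly that for any norm-compact family $\{b_t\}\subset B$ one has $\|(1-e)^{1/2}b_t(1-e)^{1/2}\|\to\|\pi(b_t)\|$ uniformly along a quasi-central approximate unit; applying this to $b_t=[x_i,\tilde h_k(t)]$ already gives the desired bound $\|[x_i,\bar h_k(t)]\|\leq C_1\delta+O(\eta)$ without introducing $a_i$ or $m_i$.
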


\begin{proof}
Assume otherwise, i.e.\ assume there are $L>0$, $n\in\bbN$ and $\epsilon>0$ such that for every $m\in\bbN$ there exist a unital $C^*$-algebra $B_m$ with ideal $I_m$, a set of contractions $\{x^{(m)}_1,...,x^{(m)}_n\}\subset B_m$ and rectifiable paths $(u^{(m)}_t)_{t\in[0,1]}$ of unitaries in $B_m/I_m$ which have length bounded by $L$, commute pointwise up to $1/m$ with $\{\pi_m(x^{(m)}_1),...,\pi_m(x^{(m)}_n)\}$ and satisfy $u^{(m)}_0=1_{B_m/I_m}$, but such that there is no lift of any $(u^{(m)}_t)_{t\in[0,1]}$ to a unitary path $(\oline{u}^{(m)}_t)_{t\in[0,1]}$ in $B_m$ which satisfies $\oline{u}^{(m)}_0=1_{B_m}$ and commutes pointwise up to $\epsilon$ with $\{x^{(m)}_1,...,x^{(m)}_n\}$.

We may assume that the sets $\{x^{(m)}_1,...,x^{(m)}_n\}$ are selfadjoint for all $m$ and consider the $C^*$-algebra
\[
B:=\left\{(b_m)\colon\left[b_m,x^{(m)}_i\right]\to 0\ \text{for all}\ i=1,...,n\right\}\subset\prod_mB_m,
\]
i.e.\ the preimage of the commutant of the image of $C^*((x^{(m)}_1)_m,...,(x^{(m)}_n)_m)$ in \linebreak $\prod_mB_m/\bigoplus_m B_m$.
Writing $I=B\cap\prod_m I_m$, one uses approximate units for the ideals $I_m$ which are quasicentral for $B_m$ to see that in fact
\[
B/I=\left\{(c_m)\colon\left[c_m,\pi_m\left(x^{(m)}_i\right)\right]\to 0\ \text{for all}\ i=1,...,n\right\}\subset\prod_m(B_m/I_m).
\]
We may further assume that the paths $u^{(m)}_t$ all have length $L$ and are length parametrized.
But then the path $u_t:=(u^{(m)}_t)_m$, $t\in[0,L]$ is contained in $B/I$ and is, most importantly, continuous.
As moreover $u_0=1_{B/I}$, this path lifts to a continuous path of unitaries $\oline{u}_t$ in $B$ with $\oline{u}_0=1_B$.
Writing $\oline{u}_t=(\oline{u}^{(m)}_t)_m$, for each $m$ we find $(\oline{u}^{(m)}_t)_{t\in[0,1]}$ to be a lift of the path $(u^{(m)}_t)_{t\in[0,1]}$ satisfying $\oline{u}^{(m)}_0=1_{B_m}$.
Moreover, by definition of $B$ we have
\[
\lim_{m\to\infty}\max_{t\in[0,L]}\left\|\left[\oline{u}^{(m)}_t,x^{(m)}_i\right]\right\|=0
\]
for each $i=1,...,n$ , a contradiction.
\end{proof}

By combining Lemma \ref{lem: lifting unitary paths} with the shortening argument from Section \ref{sec: 2}, we can now lift arbitrarily long paths of unitaries while keeping track of their asymptotic behavior.
This results in a lifting procedure for asymptotic unitary equivalences of $^*$-homomorphisms as described in Theorem \ref{thm: lifting} below.

\begin{definition}\label{def: asymptotic unitary equivalence}
Let $A,B$ be $C^*$-algebras, assume that $B$ is unital.

Given two $^*$-homomorphisms $\varphi,\psi\colon A\to B$, we say that $\varphi$ and $\psi$ are asymptotically unitarily equivalent, denoted by $\varphi\approx_{\text{uh}}\psi$, if there exists a continuous path $(u_t)_{t\in[0,\infty)}$ of unitaries in $B$ such that
\[
\left\|u_t\varphi(x)u^*_t-\psi(x)\right\|\to 0
\]
as $t\to\infty$ for all $x\in A$.
If one can moreover arrange $u_0=1$, we say that $\varphi$ and $\psi$ are strongly asymptotically unitarily equivalent and write $\varphi\approx_{\text{suh}}\psi$ in this case.
\end{definition}

\begin{theorem}\label{thm: lifting}
 Let $A$ be a separable unital $C^*$-algebra that can be locally approximated by finite direct sums of separable, unital, nuclear, simple, $KK$-semiprojective $C^*$-algebras. 
 Further let $B$ be a unital Kirchberg algebra and $\varphi,\psi\colon A\to B$ two unital $^*$-homomorphisms with $\varphi\approx_{\text{suh}}\psi$. 
 Then the following holds:
 
 Suppose $D$ is a unital $C^*$-algebra and $\pi\colon D\to B$ a surjective $^*$-homomorphism.
 If $\varphi$ lifts to $D$, i.e.\ there exists a $^*$-homomorphism $\oline{\varphi}\colon A\to D$ with $\pi\circ\oline{\varphi}=\varphi$, then so does $\psi$. 
 Moreover, there exists a lift $\oline{\psi}$ of $\psi$ with $\oline{\varphi}\approx_{\text{suh}}\oline{\psi}$.
\end{theorem}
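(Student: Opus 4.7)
The plan is to construct $\bar{\psi}$ as a pointwise limit of iterated unitary conjugates of $\bar{\varphi}$ inside $D$, by discretizing the asymptotic unitary equivalence $\varphi\approx_{\text{suh}}\psi$ into segments which are first shortened via Theorem \ref{thm: shortening} and then lifted via Lemma \ref{lem: lifting unitary paths}. Replacing $D$ by the unital corner $\bar{p}D\bar{p}$ with $\bar{p}:=\bar{\varphi}(1_A)$, which still surjects onto $B$ under $\pi$, we may assume $\bar{\varphi}$ is unital.

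Let $(u_t)_{t\in[0,\infty)}\subset\calU(B)$ witness $\varphi\approx_{\text{suh}}\psi$ with $u_0=1_B$. Fix an increasing chain of finite subsets $\calF_1\subset\calF_2\subset\ldots$ of $A$ with dense union and a summable sequence $\epsilon_k\searrow 0$. One first sets the quantifiers: iteratively choose larger finite sets $\calG_k\supset\calF_k$ and tolerances $\delta_k>0$ so that the composition ``shorten-then-lift'' takes any $\delta_k$-approximately central unitary path (with respect to $\varphi'(\calG_k)$, for any unital $^*$-homomorphism $\varphi'\colon A\to B$) in $B$ to an $\epsilon_k$-approximately central continuous path in $D$ of length at most $2\pi+1$ with matching endpoints. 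Next pick $0=t_0<t_1<t_2<\cdots$ so that $\|\Ad(u_t)\varphi(x)-\psi(x)\|<\delta_k/3$ for all $t\geq t_k$ and all $x\in\calG_k$, and set $\varphi_k:=\Ad(u_{t_k})\circ\varphi$ and $v_k:=u_{t_{k+1}}u_{t_k}^*$, so that $\varphi_0=\varphi$ and $\varphi_k\to\psi$ pointwise.

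Now run the induction. Assume $\bar{\varphi}_k\colon A\to D$ is a unital lift of $\varphi_k$, starting with $\bar{\varphi}_0:=\bar{\varphi}$. The path $s\mapsto u_{t_k+s(t_{k+1}-t_k)}u_{t_k}^*$ on $[0,1]$ connects $1_B$ to $v_k$ and $\delta_k$-commutes with $\varphi_k(\calG_k)$. Apply Theorem \ref{thm: shortening} to replace it by a path $w^{(k)}$ of length at most $2\pi+1$ with the same endpoints, still approximately central to a tolerance matched to the input of Lemma \ref{lem: lifting unitary paths} for the contractions $\bar{\varphi}_k(\calF_k)$. The lifting lemma then produces a unitary path $\bar{w}^{(k)}$ in $D$ with $\bar{w}^{(k)}_0=1_D$ and $\pi\circ\bar{w}^{(k)}=w^{(k)}$ that $\epsilon_k$-commutes with $\bar{\varphi}_k(\calF_k)$. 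Setting $\bar{v}_k:=\bar{w}^{(k)}_1$ and $\bar{\varphi}_{k+1}:=\Ad(\bar{v}_k)\circ\bar{\varphi}_k$ yields a unital lift of $\varphi_{k+1}$ with $\|\bar{\varphi}_{k+1}(x)-\bar{\varphi}_k(x)\|<\epsilon_k$ for $x\in\calF_k$. Summability of $(\epsilon_k)$ and density of $\bigcup_k\calF_k$ give a pointwise limit $\bar{\psi}\colon A\to D$, which is a unital $^*$-homomorphism lifting $\psi$.

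To witness $\bar{\varphi}\approx_{\text{suh}}\bar{\psi}$, splice the lifted segments: on $[t_k,t_{k+1}]$ set $\bar{u}_t:=\bar{w}^{(k)}_{(t-t_k)/(t_{k+1}-t_k)}\cdot\bar{v}_{k-1}\cdots\bar{v}_0$, with the empty product when $k=0$. This defines a continuous unitary path $(\bar{u}_t)_{t\geq 0}$ in $D$ with $\bar{u}_0=1_D$; on each segment $\Ad(\bar{u}_t)\bar{\varphi}$ agrees with $\Ad(\bar{w}^{(k)}_s)\bar{\varphi}_k$, which approximates $\bar{\varphi}_k$ within $\epsilon_k$ on $\calF_k$, so combined with $\bar{\varphi}_k\to\bar{\psi}$ one obtains $\|\Ad(\bar{u}_t)\bar{\varphi}(x)-\bar{\psi}(x)\|\to 0$ for all $x\in A$. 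The main obstacle is bookkeeping in the first step, matching the output of Theorem \ref{thm: shortening} sharply enough to feed into Lemma \ref{lem: lifting unitary paths} at every stage while keeping the compound tolerances summable; beyond this quantifier management, the construction is mechanical.
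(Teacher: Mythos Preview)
Your proposal is correct and follows essentially the same route as the paper: discretize the asymptotic equivalence into segments, shorten each segment via Theorem~\ref{thm: shortening} to get bounded length, then lift each segment via Lemma~\ref{lem: lifting unitary paths}, and show the resulting conjugates of $\oline{\varphi}$ form a Cauchy sequence converging to the desired lift $\oline{\psi}$. The only differences are cosmetic: the paper reparametrizes so that $t_k=k$ and works directly with one continuous lifted path $(v_t)$ rather than naming the intermediate lifts $\oline{\varphi}_k$, and it does not bother with the reduction to $\oline{\varphi}$ unital (which is harmless but unnecessary, since Lemma~\ref{lem: lifting unitary paths} only needs the $x_i$ to be contractions). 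One small phrasing issue: the length bound $2\pi+1$ belongs to the shortened path in $B$ (as input to Lemma~\ref{lem: lifting unitary paths}), not to the lifted path in $D$; but this does not affect the argument.
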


\begin{proof}
 Let $(\calF_n)$ be an increasing sequence of finite sets of contractions whose union is dense in the unit ball of $A$ and set $\epsilon_n=2^{-n}$.
 We obtain parameters $\delta_n>0$ from Lemma \ref{lem: lifting unitary paths} for the data $(L=4,|\calF_n|,\epsilon_n)$, and then find finite subsets $\calG_n$ of $A$ and parameters $\gamma_n>0$ satisfying the conclusion of Theorem \ref{thm: shortening}  for the pairs $(\calF_n,\delta_n)$.
 
 Given $\varphi$ and $\psi$ as in the statement, let $(u_t)_{t\in[0,\infty)}$ be a continuous path of unitaries in $B$ implementing the equivalence of $\varphi$ and $\psi$, i.e.\ we have $u_0=1_B$ and $\Ad(u_t)\circ\varphi$ converges pointwise to $\psi$ as $t\to\infty$.
We may reparametrize the path $(u_t)$ so that
 \[
 \left\|u_t\varphi(x)u^*_t-\psi(x)\right\|<\frac{\gamma_n}{2}\quad\text{for all}\ x\in\calG_n\ \text{and}\ t\geq n
 \]
 holds for all $n$. Since then
 \[
 \left\|\left[u_{n+t}u_n^*,(\Ad(u_n)\circ\varphi)(x)\right]\right\|=\left\|u_{n+t}\varphi(x)u^*_{n+t}-u_n\varphi(x)u^*_n\right\|<\gamma_n
 \]
 for each fixed $n$, all $x\in\calG_n$ and $t\in[0,1]$, we may apply Theorem \ref{thm: shortening} to replace $(u_t)_{t\in[n,n+1]}$ by a new, rectifiable path $(u'_t)_{t\in[n,n+1]}$ of unitaries in $B$ from $u'_n=u_n$ to $u'_{n+1}=u_{n+1}$, which commutes pointwise up to $\delta_n$ with $\calF_n$ and moreover has length bounded by 4.
But now Lemma \ref{lem: lifting unitary paths} applies iteratively, and we can lift $(u'_t)_{t\in[0,\infty)}$ to a unitary path $(v_t)_{t\in[0,\infty)}$ in $D$ satisfying $v_0=1_D$ and
 \[
 \left\|v_{n+t}\oline{\varphi}(x)v^*_{n+t}-v_n\oline{\varphi}(x)v^*_n\right\|=\left\|\left[v_{n+t}v_n^*,(\Ad(v_n)\circ\oline{\varphi})(x)\right]\right\|<\epsilon_n 
 \]
 for each fixed $n$, all $x\in\calF_n$ and $t\in[0,1]$. 
 It is clear by the choice of $\calF_n$, $\epsilon_n$ that $\Ad(v_t)\circ\oline{\varphi}$ now converges pointwise as $t\to\infty$, we denote the limit $^*$-homomorphism $A\to D$ by $\oline{\psi}$. 
 Then $\oline{\varphi}\approx_{\text{suh}}\oline{\psi}$ and 
 \[
 \pi\circ\oline{\psi}=\lim_{t\to\infty}\pi\circ(\Ad(v_t)\circ\oline{\varphi})=\lim_{t\to\infty}\Ad(u'_t)\circ\varphi=\lim_{t\to\infty}\Ad(u_t)\circ\varphi=\psi
 \]
 is verified pointwise.
\end{proof}


\section{Semiprojectivity for Kirchberg algebras}\label{sec: 4}

We now study the consequences of the lifting result from Section \ref{sec: 3} for semiprojectivity questions concerning Kirchberg algebras.
For this purpose we need to recall the notion of semiprojective $^*$-homomorphisms from \cite{Bla85}.

\begin{definition}\label{def:sp and weak sp}
A $^*$-homomorphism $\varphi\colon A\to B$ is called semiprojective if for every $C^*$-algebra $D$, every increasing chain of ideals $J_n$ in $D$ with $J_\infty=\oline{\bigcup_n J_n}$ and every $^*$-homomorphism $\sigma\colon B\to D/J_\infty$ there exists $n\in\bbN$ and a $^*$-homomorphism $\psi\colon A\to D/J_n$ making the following diagram commute:
  \[
    \xymatrix{
      && D \ar@{->>}[d] \\
      && D/J_n \ar@{->>}[d] \\
      A \ar@/^1pc/@{..>}[urr]^\psi \ar[r]^\varphi & B \ar[r]^(0.4)\sigma & D/J_\infty
    }
  \] 
We call a separable $C^*$-algebra $A$ semiprojective if the identity map $\id_A\colon A\to A$ is semiprojective.
\end{definition}

Note that by standard reduction arguments, the definition above does not change if the $^*$-homomorphism $\sigma$ is restricted to be an isomorphism.
For details and further reading on semiprojectivity we refer the reader to Loring's book \cite{Lor97}.

In this context, Theorem \ref{thm: lifting} can be regarded as a closure result for semiprojectivity of $^*$-homomorphisms between certain classes of $C^*$-algebras.
Since semiprojectivity is clearly closed under unitary conjugation, we even obtain a closure result with respect to asymptotic unitary equivalence.

\begin{lemma}\label{lem: sp closed under uh}
Let $A$ be a separable unital $C^*$-algebra that can be locally approximated by finite direct sums of separable, unital, nuclear, simple $KK$-semiprojective $C^*$-algebras and $B$ a unital Kirchberg algebra.
Then the set of unital semiprojective $^*$-homomorphisms from $A$ to $B$ is closed under asymptotic unitary equivalence.
\end{lemma}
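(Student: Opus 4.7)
The plan is to reduce semiprojectivity of $\psi$ directly to Theorem~\ref{thm: lifting} applied to a partial lift of $\varphi$. I would carry this out in three steps: (a) reduce to strong asymptotic unitary equivalence; (b) reduce the lifting problem for $\psi$ to one where the given $^*$-homomorphism $\sigma$ is an isomorphism; (c) apply Theorem~\ref{thm: lifting} to transfer a partial lift of $\varphi$ to one of $\psi$.

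For (a), assume $\varphi\colon A\to B$ is unital and semiprojective with $\varphi\approx_{\text{uh}}\psi$ via a path $(u_t)_{t\geq 0}$ of unitaries in $B$. I would replace $\varphi$ by $\varphi' := \Ad(u_0)\circ\varphi$. Semiprojectivity is preserved under this unitary conjugation: given any $\sigma\colon B\to D/J_\infty$, the composition $\sigma\circ\Ad(u_0)$ is again a $^*$-homomorphism into $D/J_\infty$, so applying semiprojectivity of $\varphi$ with $\sigma\circ\Ad(u_0)$ in place of $\sigma$ produces the required partial lift of $\sigma\circ\varphi'$. The reparametrization $v_t := u_t u_0^*$ satisfies $v_0 = 1_B$ and witnesses $\varphi'\approx_{\text{suh}}\psi$, so I may henceforth assume $\varphi\approx_{\text{suh}}\psi$.

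For (b), given data $(D, J_n, J_\infty, \sigma)$ with $\sigma\colon B\to D/J_\infty$, the standard reduction noted after Definition~\ref{def:sp and weak sp} lets me assume $\sigma$ is an isomorphism. Identifying $B$ with $D/J_\infty$ via $\sigma$, semiprojectivity of $\varphi$ provides an index $n$ and a $^*$-homomorphism $\oline{\varphi}\colon A\to D/J_n$ lifting $\varphi$ along the canonical quotient $\pi_n\colon D/J_n\to B$.

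For (c), the hypotheses of Theorem~\ref{thm: lifting} are now in place with $D/J_n$ in the role of $D$ and $\pi_n$ in the role of $\pi$: the algebra $A$ satisfies the local approximation condition, $B$ is a unital Kirchberg algebra, $\varphi\approx_{\text{suh}}\psi$, and $\oline{\varphi}$ lifts $\varphi$. The theorem produces a $^*$-homomorphism $\oline{\psi}\colon A\to D/J_n$ with $\pi_n\circ\oline{\psi} = \psi$, which is precisely the partial lift required by Definition~\ref{def:sp and weak sp} for $\psi$. The substantive content all sits in Theorem~\ref{thm: lifting}; the remaining steps are purely bookkeeping around the standard reductions, and there is no further obstacle.
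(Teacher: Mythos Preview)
Your proposal is correct and follows essentially the same approach as the paper: the paper does not give a formal proof for this lemma but simply remarks, in the sentence preceding it, that Theorem~\ref{thm: lifting} furnishes the closure result and that semiprojectivity is clearly preserved under unitary conjugation. Your steps (a)--(c) are a faithful unpacking of precisely that two-line argument.
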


By Kirchberg-Phillips classification (\cite{Kir}, \cite{Phi00}), two unital $ ^*$-homomorphisms between Kirchberg algebras sharing the same $KK$-class are asymptotically unitarily equivalent.
We therefore also obtain a closure result with respect to $KK$.

\begin{lemma}\label{lem: sp closed under kk}
Let $A$ and $B$ be unital Kirchberg algebras with $A$ either $KK$-semi\-projective or in the UCT-class.
Then, given two unital $^*$-homomorphisms $\varphi,\psi\colon A\to B$, the following holds:
If $\varphi$ is semiprojective and $[\varphi]=[\psi]$ in $KK(A,B)$, then $\psi$ is also semiprojective.
\end{lemma}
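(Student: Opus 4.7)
My plan is to reduce the statement to Lemma \ref{lem: sp closed under uh} via Kirchberg-Phillips classification. The classification theorem, as explicitly recalled in the paragraph preceding this lemma, tells us that two unital $^*$-homomorphisms between Kirchberg algebras with the same $KK$-class are asymptotically unitarily equivalent; thus the hypothesis $[\varphi]=[\psi]$ yields $\varphi\approx_{\text{uh}}\psi$. It then suffices to verify that $A$ satisfies the local approximation hypothesis of Lemma \ref{lem: sp closed under uh}, namely that it can be locally approximated by finite direct sums of separable, unital, nuclear, simple, $KK$-semiprojective $C^*$-algebras.

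First I would dispose of the case that $A$ is itself $KK$-semiprojective: being a Kirchberg algebra, $A$ is already separable, unital, nuclear and simple, so it locally approximates itself trivially (with a single summand). In the second case, where $A$ is assumed to lie in the UCT-class, I would invoke R{\o}rdam's inductive-limit decomposition \cite[Proposition 8.4.13]{Ror02}, which realizes $A$ as the direct limit of unital UCT-Kirchberg algebras $A_n$ with finitely generated $K$-groups. These building blocks are $KK$-semiprojective by \cite[Theorem 1.14, Proposition 7.13]{RS87}, and since they are simple the connecting maps are automatically injective. Consequently the images of the $A_n$ form an increasing sequence of $C^*$-subalgebras of $A$ with dense union, each isomorphic to an allowed building block, giving the required local approximation.

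With both cases settled, Lemma \ref{lem: sp closed under uh} applied to $\varphi$ and $\psi$ yields that $\psi$ is semiprojective. I do not expect a genuine obstacle here: the argument is essentially a bookkeeping exercise tying together Kirchberg-Phillips classification, the closure result of the previous lemma, and (in the UCT case) R{\o}rdam's inductive-limit decomposition. The only point requiring a moment of care is confirming the local approximation property in the UCT case, which is standard once the connecting maps are observed to be injective.
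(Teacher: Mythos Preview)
Your proposal is correct and follows exactly the route the paper intends: reduce to Lemma~\ref{lem: sp closed under uh} via Kirchberg--Phillips classification, and verify the local approximation hypothesis for $A$ either directly (when $A$ is $KK$-semiprojective) or via R{\o}rdam's inductive-limit decomposition \cite[Proposition 8.4.13]{Ror02} in the UCT case, just as the paper does in the proof of Corollary~\ref{cor: central sequence algebra}. The paper leaves the argument implicit in the sentence preceding the lemma, and your write-up simply makes these steps explicit.
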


The above lemma brings up the question which Kirchberg algebras admit semiprojective endomorphisms that are asymptotically unitarily equivalent to the identity map.
Inspired by Neub{\"u}ser's method of showing that UCT-Kirchberg algebras with finitely generated $K$-groups are asymptotically semiprojective, \cite[Satz 6.12]{Neu00}, we show that the existence of such endomorphisms is equivalent to $KK$-semiprojectivity.

\begin{proposition}\label{prop: existence sp endo}
Let $A$ be a unital Kirchberg algebra.
Then there exists a semiprojective unital endomorphism $\alpha\colon A\to A$ with $KK(\alpha)=KK(\id_A)$ if and only if $A$ is $KK$-semiprojective.
\end{proposition}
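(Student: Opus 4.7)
For the forward direction (existence of $\alpha$ implies $KK$-semiprojectivity of $A$), the idea is to reduce the lifting of $KK$-classes to the lifting of $^*$-homomorphisms via Kirchberg--Phillips classification. Given $(D, J_n, J_\infty)$ as in Definition \ref{def: KK-semiprojectivity} and a class $x \in KK(A, D/J_\infty)$, the plan is to pass to a stabilized target $(D/J_\infty) \otimes \calO_\infty \otimes \bbK$ (which does not alter $KK(A, -)$, since $\calO_\infty$ is $KK$-equivalent to $\bbC$ and $\bbK$-stabilization is a Morita equivalence) and represent $x$ by a $^*$-homomorphism $\sigma\colon A \to (D/J_\infty) \otimes \calO_\infty \otimes \bbK$. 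Semiprojectivity of $\alpha$ applied to $\sigma$ then produces an $n$ and a lift $\psi\colon A \to (D/J_n) \otimes \calO_\infty \otimes \bbK$ of $\sigma \circ \alpha$. The $KK$-class $[\psi]$ gives the required preimage, since it maps to $[\sigma \circ \alpha] = [\sigma]\cdot[\alpha] = [\sigma]\cdot[\id_A] = x$ under the induced map to $KK(A, D/J_\infty)$.

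For the backward direction ($KK$-semiprojectivity implies existence of $\alpha$), which is the harder one, I would follow the strategy of Neub{\"u}ser \cite{Neu00} as refined through Kirchberg's classification theorem for properly infinite $C^*$-algebras. The plan is to realize $A$ as an inductive limit $A = \varinjlim A_n$ of \emph{semiprojective} unital Kirchberg algebras $A_n$, with unital connecting maps $\iota_n\colon A_n \to A_{n+1}$ and inductive limit maps $\mu_n\colon A_n \to A$. By the continuity characterization of $KK$-semiprojectivity (\cite[Theorem 3.12]{Dad09}), the class $[\id_A] \in KK(A,A) \cong \varinjlim_n KK(A,A_n)$ lifts to some $y \in KK(A, A_n)$ for sufficiently large $n$. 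Kirchberg--Phillips classification provides a unital $^*$-homomorphism $\beta\colon A \to A_n$ with $[\beta]=y$, and I set $\alpha := \mu_n \circ \beta$. Then $[\alpha] = \mu_{n,*}(y) = [\id_A]$, and $\alpha$ is semiprojective because it factors through the semiprojective $A_n$: any test map $\sigma\colon A \to D/J_\infty$ gives $\sigma \circ \mu_n\colon A_n \to D/J_\infty$, which lifts (by semiprojectivity of $A_n$) to some $\psi'\colon A_n \to D/J_m$, and $\psi := \psi' \circ \beta$ is then the required lift of $\sigma \circ \alpha$.

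The main obstacle is the inductive realization step in the backward direction, namely producing the $A_n$ as semiprojective Kirchberg algebras with limit $A$ and unital connecting maps. For UCT-Kirchberg algebras, \cite[Proposition 8.4.13]{Ror02} combined with the known semiprojectivity of UCT-Kirchberg algebras with finitely generated $K$-groups (Szyma{\'n}ski \cite{Szy02}, Spielberg \cite{Spi09}) provides such a decomposition, matching the template of Neub{\"u}ser \cite{Neu00}. For a general (non-UCT) Kirchberg algebra this realization is not a priori available, and the approximating sequence must instead be built intrinsically from $A$ using Kirchberg's classification for properly infinite nuclear $C^*$-algebras (\cite{Kir}), with the $KK$-semiprojectivity hypothesis itself entering as an essential ingredient to guarantee that the $A_n$ can be chosen semiprojective. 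A subsidiary technicality in the forward direction is to justify the stabilization step and the representability of $KK$-classes by honest $^*$-homomorphism representatives.
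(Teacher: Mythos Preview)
Your forward direction is fine and matches the paper (which simply cites \cite[Corollary 3.13]{Dad09}).

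The backward direction has a genuine gap. You want to write $A$ as an inductive limit of \emph{semiprojective} Kirchberg algebras $A_n$, but this is not available: in the non-UCT case there is no such decomposition result, and even in the UCT case your appeal to Szyma{\'n}ski and Spielberg is circular, since their results only cover torsion-free $K_1$ --- precisely the gap this paper is trying to close. You acknowledge this obstacle yourself but do not indicate how to overcome it.

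The paper sidesteps this completely. Instead of asking for semiprojective \emph{algebras} $A_n$, it uses Blackadar's shape theory \cite[Theorem 4.3]{Bla85}: every separable $C^*$-algebra $A$ admits a shape system $A=\varinjlim A_n$ in which the connecting maps, and hence the canonical maps $\pi_n\colon A_n\to A$, are semiprojective \emph{$^*$-homomorphisms}. The $A_n$ are merely some unital separable $C^*$-algebras --- not Kirchberg, not semiprojective as algebras --- but that is all that is needed. One then lifts $[\id_A]$ to $KK(A,A_n)$ by $KK$-semiprojectivity, observes that $A_n$ is properly infinite for large $n$ (since $A$ is), and invokes Kirchberg's classification for properly infinite $C^*$-algebras to realize the lifted class by a unital $^*$-homomorphism $\beta\colon A\to A_n$. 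The endomorphism $\alpha=\pi_n\circ\beta$ is then semiprojective because $\pi_n$ is, and has the correct $KK$-class. The key insight you are missing is that semiprojectivity of the map $\pi_n$ suffices; one never needs the building blocks themselves to be semiprojective.
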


\begin{proof}
 The 'only if' part is essentially contained in \cite[Corollary 3.13]{Dad09}.
 To verify the other implication, assume that $A$ is $KK$-semiprojective.
 By Theorem 4.3 of \cite{Bla85} there exists a shape system for $A$, i.e.\ we can write $A$ as an inductive limit $\varinjlim A_n$ with semiprojective unital connecting maps (for some unital separable $C^*$-algebras $A_n$).
 We denote by $\pi_n\colon A_n\to\varinjlim A_n$ the canonical maps, which are semiprojective as well.
 By $KK$-semiprojectivity of $A$ we can now lift $KK(\id_A)$ to an element in $KK(A,A_n)$ for some $n$.
 Using that $A_n$ must be properly infinite for $n$ sufficiently large, Kirchberg's Classification Theorem (\cite{Kir}, see \cite[Theorem 8.3.3]{Ror02}) allows us to realize this $KK$-class by a $^*$-homomorphism $\beta$ (a priori from $A$ to $A_n\otimes\bbK$, but using $KK(\pi_n\circ\beta)=KK(\id_A)$ in the uniqueness part of Kirchberg's theorem one can actually arrange $\beta$ to be a unital $^*$-homomorphism $A\to A_n$).
 Now $(\pi_n\circ\beta)\colon A\to A$ is semiprojective, as it is a composition with one factor being semiprojective, and satisfies $KK(\pi_n\circ\beta)=KK(\id_A)$ as desired.
\end{proof}

A straightforward combination of the existence result \ref{prop: existence sp endo} with the closure result \ref{lem: sp closed under uh} now yields our main theorem.

\begin{theorem}\label{thm: main}
A Kirchberg algebra is $KK$-semiprojective if and only if it is semiprojective.
\end{theorem}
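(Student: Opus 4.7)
The plan is to deduce Theorem \ref{thm: main} as a short combination of Proposition \ref{prop: existence sp endo} with one of the closure results of this section; no fresh technical ingredients are required at this point.

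For the direction semiprojective $\Rightarrow$ $KK$-semiprojective, I would simply apply the ``only if'' half of Proposition \ref{prop: existence sp endo} to the endomorphism $\id_A$. Indeed, if $A$ is semiprojective then $\id_A$ is itself a semiprojective unital endomorphism of $A$ with $KK(\id_A)=KK(\id_A)$, so the proposition immediately delivers that $A$ is $KK$-semiprojective.

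For the converse direction $KK$-semiprojective $\Rightarrow$ semiprojective, I would proceed in two steps. First, invoke the ``if'' half of Proposition \ref{prop: existence sp endo} to obtain a semiprojective unital endomorphism $\alpha\colon A\to A$ satisfying $KK(\alpha)=KK(\id_A)$. Second, apply Lemma \ref{lem: sp closed under kk} with $B=A$, $\varphi=\alpha$ and $\psi=\id_A$: since $A$ is $KK$-semiprojective and $\alpha$ is semiprojective with $[\alpha]=[\id_A]$ in $KK(A,A)$, the lemma transfers semiprojectivity from $\alpha$ to $\id_A$, which is precisely semiprojectivity of $A$. Equivalently, one could bypass Lemma \ref{lem: sp closed under kk} and use the Kirchberg-Phillips classification theorem directly to upgrade $KK(\alpha)=KK(\id_A)$ to an asymptotic unitary equivalence $\alpha\approx_{\text{uh}}\id_A$, and then invoke Lemma \ref{lem: sp closed under uh}, noting that $A$ (being separable, unital, nuclear, simple and $KK$-semiprojective) is trivially locally approximated by itself.

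I do not expect a genuine obstacle at this final stage: the entire substantive content has already been absorbed into Proposition \ref{prop: existence sp endo} (whose proof rests on a shape system for $A$ together with Kirchberg's Classification Theorem for properly infinite algebras) and into Lemma \ref{lem: sp closed under uh} (which in turn depends on the Basic Homotopy Lemma of Section \ref{sec: 2} and the path-lifting machinery of Section \ref{sec: 3}). Once these ingredients are in place, Theorem \ref{thm: main} follows in essentially a single line.
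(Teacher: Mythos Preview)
Your argument for the unital case is essentially identical to the paper's: obtain a semiprojective endomorphism $\alpha$ in the $KK$-class of $\id_A$ via Proposition \ref{prop: existence sp endo}, then transfer semiprojectivity to $\id_A$ via Lemma \ref{lem: sp closed under uh} (or equivalently Lemma \ref{lem: sp closed under kk}).

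There is, however, a genuine gap: you have implicitly assumed that $A$ is unital. Kirchberg algebras are, by Zhang's dichotomy, either unital or stable, and Proposition \ref{prop: existence sp endo} is stated only for \emph{unital} Kirchberg algebras---the very notion of a unital endomorphism makes no sense otherwise, and Lemmas \ref{lem: sp closed under uh} and \ref{lem: sp closed under kk} likewise require a unital domain. The paper handles the stable case by a separate reduction: pick a nonzero projection $p\in A$, observe that the corner $pAp$ is a unital $KK$-semiprojective Kirchberg algebra, apply the unital case to conclude that $pAp$ is semiprojective, and then invoke \cite[Theorem 4.1]{Bla04} to deduce that $A\cong pAp\otimes\bbK$ is semiprojective. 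Your proposal omits this reduction entirely. A secondary instance of the same oversight: your route for the forward direction via Proposition \ref{prop: existence sp endo} also presupposes unitality; the paper instead cites \cite[Corollary 3.13]{Dad09} directly, which covers all separable $C^*$-algebras at once.
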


\begin{proof}
Any semiprojective Kirchberg algebra is $KK$-semiprojective by \cite[Corollary 3.13]{Dad09}.

In order to verify the other implication, let $A$ be a $KK$-semiprojective Kirchberg algebra.
If $A$ is unital, Proposition \ref{prop: existence sp endo} provides us with a semiprojective unital endomorphism $\alpha$ of $A$ satisfying $KK(\alpha)=KK(\id_A)$.
Hence, by Lemma \ref{lem: sp closed under uh}, semiprojectivity passes from $\alpha$ to $\id_A$, i.e.\ $A$ is semiprojective.

If $A$ is stable, let $p$ be any nonzero projection in $A$ and consider the corner $pAp$.
Clearly, $pAp$ is also $KK$-semiprojective and hence semiprojective by the first part.
Using \cite[Theorem 4.1]{Bla04}, it follows that $A\cong pAp\otimes\bbK$ is semiprojective as well.
\end{proof}

Since a Kirchberg algebra satisfying the UCT is $KK$-semiprojective if and only if it has finitely generated $K$-groups (\cite[Proposition 3.14]{Dad09}, cf.\ also \cite[Corollary 2.11]{Bla04}), Theorem \ref{thm: main} gives a positive solution to \hyperref[conj:Blackadar]{Blackadar's Conjecture}.

\begin{corollary}\label{cor:Blackadar conjecture}
 A UCT-Kirchberg algebra is semiprojective if and only if it has finitely generated $K$-theory.
\end{corollary}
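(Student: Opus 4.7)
The plan is to deduce this statement by combining the main theorem of the paper with the known $K$-theoretic characterization of $KK$-semiprojectivity in the presence of the UCT. Concretely, Theorem \ref{thm: main} asserts that for a Kirchberg algebra $A$, semiprojectivity is equivalent to $KK$-semiprojectivity. Hence the task reduces to identifying, within the UCT-class, those Kirchberg algebras that are $KK$-semiprojective as precisely those with finitely generated $K$-groups.

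First I would invoke Theorem \ref{thm: main} to replace the word ``semiprojective'' in the corollary by ``$KK$-semiprojective.'' Then, for a UCT-Kirchberg algebra $A$, I would appeal to \cite[Proposition 3.14]{Dad09}, which states that UCT-algebras are $KK$-semiprojective if and only if their $K$-theory is finitely generated. Chaining these two equivalences gives the desired statement. As an alternative reference for the $K$-theoretic direction, one can cite \cite[Corollary 2.11]{Bla04}, where Blackadar already established that semiprojectivity of a UCT-Kirchberg algebra forces the $K$-groups to be finitely generated; this handles the ``only if'' direction directly, with the novel content residing in the ``if'' direction supplied by Theorem \ref{thm: main}.

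There is no real obstacle at this stage, since all the work has been done: the hard implication (finitely generated $K$-theory $\Rightarrow$ semiprojectivity) passes through two external inputs, namely the equivalence of $KK$-semiprojectivity and finitely generated $K$-theory under the UCT (which in turn relies on \cite[Theorem 1.14, Proposition 7.13]{RS87}), and the closure and existence results assembled into Theorem \ref{thm: main}. The proof will therefore consist essentially of a one-line citation chain, noting explicitly that the UCT is used only to translate $KK$-semiprojectivity into the more concrete condition of finite generation of $K_0$ and $K_1$.
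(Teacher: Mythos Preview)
Your proposal is correct and matches the paper's own argument exactly: the corollary is deduced in one line by combining Theorem~\ref{thm: main} with \cite[Proposition 3.14]{Dad09} (cf.\ also \cite[Corollary 2.11]{Bla04}), which characterizes $KK$-semiprojectivity for UCT-algebras via finite generation of $K$-theory.
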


One may ask in general, i.e.\ outside the finitely generated case, which unital $^*$-homomorphisms between UCT-Kirchberg algebras $A$ and $B$ are semiprojective.
By Lemma \ref{lem: sp closed under kk}, we know that this only depends on the $KK$-class of the homomorphism.
In some cases this is still determined by $K$-theory, as in the following corollary which can be proved along the lines of $\ref{prop: existence sp endo}$ and $\ref{thm: main}$.

\begin{corollary}\label{cor: fg image}
Let $A$ and $B$ be unital UCT-Kirchberg algebras.
If $K_*(A)$ is free or $K_*(B)$ is divisible, then a unital $^*$-homomorphism $\varphi\colon A\to B$ is semiprojective if and only if its image on $K$-theory is finitely generated.
\end{corollary}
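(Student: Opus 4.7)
The plan is to follow the template of Proposition \ref{prop: existence sp endo} and Theorem \ref{thm: main}: since semiprojectivity is $KK$-invariant for unital $^*$-homomorphisms between unital Kirchberg algebras in the UCT class (Lemma \ref{lem: sp closed under kk}), it suffices to replace $\varphi$ by a convenient representative of its $KK$-class that visibly factors through a semiprojective intermediate Kirchberg algebra with finitely generated $K$-theory.

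For the ``only if'' direction, I would write $B = \varinjlim B_n$ with each $B_n$ a unital UCT-Kirchberg algebra with finitely generated $K$-groups, as in \cite[Proposition 8.4.13]{Ror02}, encode this inductive system as a single mapping telescope $D$ equipped with its canonical ideal filtration $(J_n)$ such that $D/J_\infty \cong B$ and each $D/J_n$ is controlled by the finitely generated algebra $B_n$, and then apply semiprojectivity of $\varphi$ to $\sigma = \id_B$. The resulting lift $\psi\colon A \to D/J_n$ forces $\varphi_*(K_*(A))$ to factor through the finitely generated group $K_*(D/J_n)$, and is therefore itself finitely generated. This direction uses neither the freeness of $K_*(A)$ nor the divisibility of $K_*(B)$.

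For the ``if'' direction, the hypothesis enters by collapsing the universal coefficient sequence
\[
0 \to \Ext^1(K_*(A), K_{*+1}(B)) \to KK(A, B) \to \Hom(K_*(A), K_*(B)) \to 0
\]
to an isomorphism $KK(A,B) \cong \Hom(K_*(A), K_*(B))$: this happens exactly when $K_*(A)$ is free (projective) or when $K_*(B)$ is divisible (injective). Let $G := \varphi_*(K_*(A))$, a finitely generated graded abelian group containing the unit class $[1_B] = \varphi_*([1_A])$. By a standard range-of-invariant result for UCT-Kirchberg algebras, I realize $(G, [1_B])$ as the pointed $K$-theory of a unital UCT-Kirchberg algebra $C$, which is then semiprojective by Theorem \ref{thm: main}. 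I factor $\varphi_*$ through $K_*(C)$ as $K_*(A) \twoheadrightarrow K_*(C) \hookrightarrow K_*(B)$, lift each arrow to a $KK$-class via the UCT (choosing any $\Ext$-component if the relevant $KK$-group is not already pure $\Hom$), and realize these classes by unital $^*$-homomorphisms $\beta\colon A \to C$ and $\iota\colon C \to B$ using the Kirchberg-Phillips classification. By the collapse just discussed, $[\iota \circ \beta] = [\varphi]$ in $KK(A, B)$.

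Because $C$ is semiprojective, any $^*$-homomorphism into $C$ is automatically semiprojective (given $\tau\colon C\to D/J_\infty$, first lift $\tau$ using semiprojectivity of $C$ and then compose with $\beta$), so $\beta$ is semiprojective; post-composition with $\iota$ preserves semiprojectivity, so $\iota \circ \beta$ is semiprojective; and Lemma \ref{lem: sp closed under kk} transfers this property to $\varphi$. The main conceptual obstacle is justifying the collapse of the UCT sequence, which is exactly the role played by the free/divisible hypothesis: without it, an $\Ext$-class in $KK(A,B)$ could force $[\iota \circ \beta]$ and $[\varphi]$ to disagree even though they share the same induced map on $K$-theory, and the factorization argument would no longer reach $\varphi$ itself.
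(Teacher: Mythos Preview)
Your proposal is correct and follows precisely the template the paper indicates (``along the lines of \ref{prop: existence sp endo} and \ref{thm: main}''): you produce a semiprojective representative of $[\varphi]$ by factoring through a semiprojective UCT-Kirchberg algebra with finitely generated $K$-theory, and then invoke Lemma~\ref{lem: sp closed under kk}. The only cosmetic difference is that you build the intermediate algebra $C$ directly from $\varphi_*(K_*(A))$ rather than extracting it as a stage of an inductive decomposition of $B$, which is arguably cleaner.
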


However, having finitely generated image on $K$-theory is in general not enough to ensure semiprojectivity of a $^*$-homomorphism between UCT-Kirchberg algebras.
The reason being that the class of any semiprojective $^*$-homomorphism $\varphi\colon A\to\varinjlim B_n$ must lift to an element in $\varinjlim KK(A,B_n)$ by \cite[Theorem 3.1]{Bla85}.
But as the functor $\Ext^1_\bbZ(K_*(A),\cdot)$ is not continuous in general, one can even exhibit examples of Kirchberg algebras $A,B_n$ and $^*$-homomorphisms $\varphi\colon A\to\varinjlim B_n$ with vanishing $K_*(\varphi)$ for which the class $[\varphi]$ does not lift as described above.


\end{document}